\definecolor{light}{gray}{0.65}
\numberwithin{equation}{section}
\def\lcm{\operatorname{lcm}}
\DeclareMathOperator{\vol}{vol}
\DeclareMathOperator{\HB}{HB}
\def\ZZ{{\mathbb Z}}
\def\QQ{{\mathbb Q}}
\def\RR{{\mathbb R}}
\newtheorem{corollary}[algocf]{Corollary}
\newtheorem{theorem}[algocf]{Theorem}
\newtheorem{proposition}[algocf]{Proposition}
\theoremstyle{definition}
\newtheorem{definition}[algocf]{Definition}
\newtheorem{remark}[algocf]{Remark}
\newtheorem{example}[algocf]{Example}
\newtheorem{question}[algocf]{Question}
\numberwithin{algocf}{section}
\def\ttt#1{\texttt{#1}}
\begin{document}

\title{On the score sheets of a round-robin football tournament}

\author{Bogdan Ichim}

\address{Simion Stoilow Institute of Mathematics of the Romanian Academy, Research Unit 5, P.O. Box 1-764, 014700 Bucharest, Romania} \email{bogdan.ichim@imar.ro}

\author{Julio Jos\'e Moyano-Fern\'andez}

\address{Universitat Jaume I, Campus de Riu Sec, Departamento de Matem\'aticas \& Institut Universitari de Matem\`atiques i Aplicacions de Castell\'o, 12071
Caste\-ll\'on de la Plana, Spain} \email{moyano@uji.es}

\subjclass[2010]{Primary: 68R05; Secondary: 05A15, 15A39}
\keywords{Score sheets; affine monoids; Hilbert basis; multiplicity; Hilbert series.}
\thanks{The first author was partially supported  by a grant of the Romanian National Authority for Scientific Research and Innovation,
CNCS/CCCDI -- UEFISCDI, project number PN-III-P2-2.1-PED-2016-0436, within PNCDI III. The second author was partially supported by the Spanish Government Ministerio de Econom\'ia, Industria y Competitividad (MINECO), grants MTM2012-36917-C03-03, MTM2015-65764-C3-2-P, MTM2016-81735-REDT and MTM2016-81932-REDT, as well as by Universitat Jaume I, grant P1-1B2015-02.}

\begin{abstract}
The set of (ordered) score sheets of a round-robin football tournament played
between $n$ teams together with the pointwise addition has the structure of an affine monoid. In this paper we study (using both theoretical and  computational methods)
the most important invariants of this monoid, namely the Hilbert basis, the multiplicity, the Hilbert series and the Hilbert function.
\end{abstract}

\maketitle

\section{Introduction}

Football matches are nowadays ubiquitous, stirring up deep passions in the society. Within the several procedures for selecting a winner from among many contestants, round-robin tournaments are widely extended.  A round-robin tournament is a competition in which each contestant meets all other contestants in turn. Famous examples of this kind of tournaments are the group stages of the FIFA World Cup, the UEFA European Football Championship and the UEFA Champions League (each group contains 4 teams). Goal difference (calculated as the number of goals scored minus the number of goals conceded) may be used to separate teams that are at the same level on points (according to the specific rules of a given tournament).


We consider score sheets of round-robin tournaments as tables showing the number of goals each team scores any other. Then, the problem of counting all possible score sheets (with the total number of goals fixed) of a certain tournament becomes naturally a problem for combinatorics and statistics. Moreover, we consider \emph{ordered} score sheets in the sense that the teams are ordered by the total number of goals scored. While counting score sheets in general is an easy problem, the effective counting of ordered score sheets turns out to be a quite challenging problem for which a possible approach is presented in this paper. We would like point out that the counting of score sheets is a relevant issue in the theory of weighted logic and statistical games; for instance, see the book of Epstein~\cite[Chapter 9]{E} which deals with score sheets of round-robin tournaments recording the number of points obtained by teams.

Round-robin tournaments may also be seen as directed graphs (see \cite{HM} or \cite{K}). We do not follow this approach here, our presentation is based on  monoid theory.

In a recent paper D. Zeilberger \cite{EZ} deduced a formula for counting of the number of ways for the four teams of the group stage of the FIFA World Cup to each have $r$ goals \emph{for} and $r$ goals \emph{against}. This is nothing but counting the number of $4\times 4$ magic squares with line-sums equal to $r$ under the restriction of all diagonal entries to be $0$---the problem without this restriction was treated by R.~P.~Stanley \cite[Proposition 4.6.19]{St}. He also changes the number of teams in a computational experiment, see his web-page \cite{Z}.

Inspired by these results, we present in this paper a systematic study of the score sheets. We consider the set of (ordered) score sheets of a round-robin football tournament played between $n$ teams. Since it turns out that this set has a natural structure of an affine monoid, say $\mathscr{M}_n$, we focus our attention on the most important invariants of the monoid --- that are by no means easy to compute in general. More precisely, we study the Hilbert basis, the multiplicity, the Hilbert series and the Hilbert function of $\mathscr{M}_n$.

We note that, from a practical point of view, the most important numerical invariants are the Hilbert series and the Hilbert function, since they may be used for counting the number of ordered score sheets that may result from a round-robin football tournament. However, it turns out that they are also the most difficult to compute.

This paper is structured as follows. In Section 2 we review some standard facts on rational cones and affine monoids. For details we refer the reader to Bruns and Gubeladze \cite{BG} or Bruns and Herzog \cite{BH}. Section 3 provides a short exposition about score sheets in the context of algebraic combinatorics; in particular, we introduce the monoid $\mathscr{M}_n$ of the ordered score sheets of a round-robin tournament played by $n$ teams. In Section 4 we study the Hilbert basis of $\mathscr{M}_n$ for which we give a general description in Theorem \ref{theo:HB}.

In Section 5 we focus on counting ordered score sheets with a given total number of goals i.e., on computing the Hilbert series and the Hilbert function. The Hilbert function is given by a quasipolynomial, for which we can compute its leading coefficient (i.e., the multiplicity of the monoid) in general, see Theorem \ref{theo:Mult}. Unfortunately, we are not able to present a general description of the  Hilbert series; this may be a very difficult problem (see Question \ref{quest:HS}). Using the software Normaliz \cite{Nmz}, we study several (particular, but important) cases, namely we compute the Hilbert series for tournaments with up to 7 teams. The closing Section 6 contains a report on the computational experiments done.

\section{Preliminaries}

In this section  we recall the terminology used, following Bruns and Gubeladze \cite{BG}.

\subsection{Rational cones, affine monoids and Hilbert bases} We denote by $\ZZ_+,\QQ_+$, resp. $\RR_+$ the subsets of nonnegative elements in $\ZZ,\QQ,\RR$.
Let $0\neq d \in \ZZ_{+}$ and $0\neq a\in \mathbb{Q}^{d}$. Denoting by $\langle \ ,\ \rangle$ the scalar product in $\mathbb{R}^{d}$, we consider the \emph{closed rational linear halfspaces}
$H^{+}_{a}=\{x\in
\mathbb{R}^{d}\ | \ \langle x,a\rangle \geq 0\} \ {\rm{and}} \  H^{-}_{a}=H^{+}_{-a}=\{x\in
\mathbb{R}^{d}\ | \ \langle x,a\rangle \leq  0\}.$
A subset $C\subset \RR^d$ is called a \emph{rational cone} if it is the intersection of
finitely many closed linear rational halfspaces. Equivalently, by the theorem of Minkowski--Weyl (see \cite[1.15]{BG}), $C$ is of
type $\RR_+x_1+\dots+\RR_+x_t$ with $x_i\in\QQ^d$, $i=1,\dots,t$. With this presentation, we say that $x_1,\ldots,x_t$ form a \emph{system of generators} for
$C$. If a rational cone $C$ is presented as $C=H^{+}_{a_{1}}\cap \ldots
\cap H^{+}_{a_{s}}$ such that no  $H^{+}_{a_{i}}$ can be
omitted, then we say that this is an \emph{irredundant representation} of $C$. The \emph{dimension} of a cone is the dimension of the smallest vector subspace of $\RR^d$ which contains it. If $\dim
C=d$, then the halfspaces in an irredundant representation of $C$  are uniquely determined.
A cone is \emph{pointed} if
$x,-x\in C$ implies $x=0$. In the following all cones $C$ are rational and pointed and we shall omit these attributes.

A hyperplane $H$ is called a \emph{supporting hyperplane} of a cone $C$ if $C\cap H\neq \emptyset$ and $C$ is contained in one of the closed halfspaces determined by $H$.
If $H$ is a supporting hyperplane of $C$, then $F=C\cap H$ is called a \emph{face} of $C$.
Faces with $\dim=1$ are called \emph{extreme rays} and the vectors with coprime integral components spanning them are called \emph{extreme integral generators}.

An \emph{affine monoid} $M$ is a finitely generated submonoid of the lattice $\ZZ^d$.
We say that $M$ is \emph{positive} if $x,-x\in M$ implies $x=0$.
As in \cite{BG}, we use the following terminology.

\begin{definition}
Let $M$ be a positive affine monoid. An element $x\in M$ is \emph{irreducible} if $x\neq 0$ and $x=y+z$ with $y,z\in M$ implies
$y=0$ or $z=0$. A subset $B$
of $M$ is a \emph{system of generators} if
$M=\ZZ_+B$. The unique minimal
system of generators of $M$ given by its irreducible elements
is called the \emph{Hilbert basis} and denoted by $\HB(M)$.
\end{definition}

The Hilbert basis is necessarily finite since $M$ has a finite system
of generators. Moreover, every system of generators
contains the Hilbert basis.
(One should note that sometimes a not minimal system of generators of $M$ is called a Hilbert basis of $M$.)
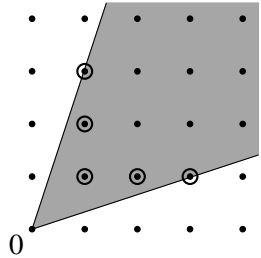
\begin{figure}[bht]\label{fig:1}
\begin{center}
	\begin{tikzpicture}[scale=0.7]
	\filldraw[light] (0,0) -- (1.411,4.3) -- (4.3,4.3) -- (4.3,1.411) -- cycle;
	\draw (0,0) -- (1.411,4.3);
	\draw (0,0) -- (4.3,1.411) node at (-0.3,-0.3){\small $0$};
	\foreach \x in {0,...,4}
	\foreach \y in {0,...,4}
	{
		\filldraw[fill=black] (\x,\y)  circle (1.5pt);
	}
	\draw[black,thick] (1,1) circle (4pt);
	\draw[black,thick] (1,2) circle (4pt);
	\draw[black,thick] (1,3) circle (4pt);
	\draw[black,thick] (2,1) circle (4pt);
    \draw[black,thick] (3,1) circle (4pt);
	\end{tikzpicture}
\end{center}
\caption{A monoid of type $C\cap\ZZ^2$ and its Hilbert basis}
\end{figure}

We are interested in sets $M$ that arise
as intersections of rational cones $C$ with the lattice $\ZZ^d$.
By Gordan's lemma (see \cite[2.9]{BG}), these sets are in fact affine monoids. Moreover, $C$ is pointed if and only if $M$ is positive.
In the following all monoids are positive, and we
omit this attribute. Let us say that $y\in M$
\emph{reduces} $x\in M$ if $y\not=0$, $x\neq y$, and $x-y\in
M$.

\begin{proposition}
Let $M$ be an affine monoid,
$B\subset M$ a system of generators, and $x\in B$. If
$x$ is reduced by some $y\in B$, then $B\setminus\{x\}$ is again a
system of generators.
\end{proposition}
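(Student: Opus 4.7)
The plan is to replace $x$ in any representation via $B$ by a representation that avoids $x$ entirely. The key tool is positivity of $M$: since $M$ sits inside a pointed rational cone, there exists a $\ZZ$-linear functional $\deg \colon \ZZ^d \to \ZZ$ that is strictly positive on $M \setminus \{0\}$ (one can take, for instance, the sum of the defining halfspace functionals of a pointed supporting cone of $M$, which is strictly positive on every nonzero element of the cone). In particular, for every $b \in B$ we have $\deg(b) \geq 1$.

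Next I would exploit this positivity to express $x$ without using $x$ itself. By the reduction hypothesis, $y \in B \setminus \{0\}$ with $y \neq x$ and $z := x - y \in M$. Both $y$ and $z$ are nonzero elements of $M$ (the latter because $x \neq y$), so $\deg(y) \geq 1$ and $\deg(z) = \deg(x) - \deg(y) \leq \deg(x) - 1 < \deg(x)$. Since $B$ generates $M$, write
\[
z \;=\; \sum_{b \in B} e_b \, b \qquad \text{with } e_b \in \ZZ_+.
\]
Applying $\deg$ and using $\deg(b) \geq 1$ for every $b \in B$, we get $e_x \deg(x) \leq \deg(z) < \deg(x)$, forcing $e_x = 0$. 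Thus $z \in \ZZ_+(B \setminus \{x\})$, and consequently
\[
x \;=\; y + z \;\in\; \ZZ_+\bigl(B \setminus \{x\}\bigr),
\]
since $y \in B \setminus \{x\}$ as well.

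Finally, I would conclude the proposition. Any $m \in M$ admits an expression $m = \sum_{b \in B} c_b\, b$ with $c_b \in \ZZ_+$; substituting the representation of $x$ obtained above into the $c_x x$ term produces an expression of $m$ as a nonnegative integer combination of elements of $B \setminus \{x\}$. Hence $B \setminus \{x\}$ is still a system of generators.

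The only real subtlety is producing the positive grading $\deg$; once that is in hand the argument is a one-line degree count. I would therefore either cite the positivity-implies-grading fact from \cite{BG} or, if a self-contained proof is preferred, construct $\deg$ explicitly from an interior lattice point of the dual of the cone $\RR_+ M$.
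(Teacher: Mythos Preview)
Your argument is correct, but it takes a different route from the paper. The paper dispatches the proposition in one line by invoking the Hilbert basis: since every system of generators contains $\HB(M)$, and since the hypothesis $x = y + (x-y)$ with $y,\,x-y\in M\setminus\{0\}$ shows $x$ is reducible and hence $x\notin\HB(M)$, the set $B\setminus\{x\}$ still contains $\HB(M)$ and therefore still generates. Your proof instead builds a strictly positive grading $\deg$ and uses a degree inequality to force $e_x=0$ in a representation of $z=x-y$, thereby exhibiting $x\in\ZZ_+(B\setminus\{x\})$ directly. The paper's approach is shorter because it leans on the existence and minimality of $\HB(M)$ (facts stated just before the proposition), whereas your approach is self-contained and does not presuppose the Hilbert-basis machinery---at the cost of importing the ``positive monoid admits a positive grading'' fact, which you rightly flag as the only nontrivial ingredient. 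One small overstatement: you assert $\deg(b)\ge 1$ for every $b\in B$, but nothing forbids $0\in B$; fortunately your inequality $e_x\deg(x)\le\deg(z)$ only needs $\deg(b)\ge 0$ for $b\in M$ together with $\deg(x)\ge 1$, both of which hold.
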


\begin{proof}
Note that $B$ contains $\HB(M)$. It is enough to show that
$B\setminus\{x\}$ contains $\HB(M)$ as well. If $x-y\not=0$, then $x$ is reducible, and does not belong to $\HB(M)$.
\end{proof}

We deduce the following useful criterion.

\begin{corollary}\label{cor:HB} Let $M$ be an affine monoid and $B$ a subset of $M$ such that $0\notin B$. Then $B=\HB(M)$ if and only if the following conditions are fulfilled:
\begin{enumerate}
\item $B$ is a system of generators for $M$;
\item for every distinct $x, y \in B$, the difference $x-y \notin M$.
\end{enumerate}
\end{corollary}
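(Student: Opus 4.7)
The plan is to prove the two implications separately, using the preceding proposition (or directly the irreducibility characterization of $\HB(M)$) for the forward direction, and the positivity of $M$ for the reverse direction.

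For the forward direction, suppose $B=\HB(M)$. Condition (1) is immediate from the definition of the Hilbert basis. For (2), I would argue by contradiction: if there were distinct $x,y\in B$ with $x-y\in M$, then $y\neq 0$ (since $0\notin B$) and $x-y\neq 0$ (since $x\neq y$), so the decomposition $x=y+(x-y)$ would exhibit $x$ as a sum of two nonzero elements of $M$, contradicting the irreducibility of $x\in \HB(M)$. Equivalently, one may observe that $y$ reduces $x$ and invoke the preceding proposition to obtain $B\setminus\{x\}$ as a strictly smaller system of generators, contradicting the minimality of $B=\HB(M)$.

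For the reverse direction, assume (1) and (2). The inclusion $\HB(M)\subseteq B$ is free from (1), since every system of generators contains the Hilbert basis. For the converse inclusion $B\subseteq \HB(M)$, I would argue by contradiction. Suppose some $x\in B$ is reducible, $x=y+z$ with $y,z\in M\setminus\{0\}$. Using (1), write $y=\sum_i a_i b_i$ with $b_i\in B$ and $a_i\in \ZZ_+$, and pick any index $i_0$ with $a_{i_0}\geq 1$. Then $y-b_{i_0}\in M$, so $x-b_{i_0}=z+(y-b_{i_0})\in M$; by (2) this forces $b_{i_0}=x$. Applying this reasoning to every index with positive coefficient gives $y=kx$ for some $k\geq 1$, and the identity $x=kx+z$ rewrites as $(k-1)x+z=0$.

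The subtle point of the argument is this final degenerate step, where the expansion of $y$ over $B$ happens to use only $x$. Here I would appeal to the standing assumption that $M$ is positive: a relation $(k-1)x+z=0$ with both summands in $M$ forces each summand to be $0$, so $z=0$, contradicting $z\neq 0$ and completing the proof.
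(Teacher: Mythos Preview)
Your proof is correct. The paper does not give an explicit proof of this corollary; it is stated as an immediate consequence of the preceding proposition, and your forward direction is exactly the argument the paper has in mind. For the reverse direction your argument is sound, but you can shortcut the final positivity step: once you know $\HB(M)\subseteq B$, expand $y$ over $\HB(M)$ rather than over $B$; any $h\in\HB(M)$ occurring with positive coefficient satisfies $x-h\in M$, so by (2) $h=x$, contradicting $x\notin\HB(M)$ directly.
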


\subsection{Hilbert series and functions of affine monoids} Let $C\subset \RR^d$ be a full dimensional cone (i.e. $\dim C=d$) and $M=C\cap\ZZ^d$ be the corresponding affine monoid.
A \emph{$\ZZ$-grading} of  $\ZZ^d$ is simply a linear map $\deg: \ZZ^d\to\ZZ$. If all sets $\{x\in M: \deg x=u\}$ are finite, then the \emph{Hilbert series} of $M$
with respect to the grading $\deg$ is the formal Laurent series
$$
H_M(t)=\sum_{i\in \ZZ} \#\{x\in M: \deg x=i\}t^{i}=\sum_{x\in M}t^{\deg x}.
$$
We shall assume in the following that $\deg x >0$ for all nonzero $x\in M$ (such a grading exists given the fact that $C$ is pointed)
and that there exists an $x\in\ZZ^d$ such that $\deg x=1$.
Then the Hilbert series $H_M(t)$ is the Laurent expansion of a rational function at the origin:

\begin{theorem}[Hilbert, Serre, Ehrhart, Stanley] Let $M=C\cap\ZZ^d$ as above.
	The Hilbert series of $M$ may be written in the form
	$$
	H_M(t)=\frac{R(t)}{(1-t^e)^d},\qquad R(t)\in\ZZ[t], 
	$$
	where $e$ is the $\lcm$
	of the degrees of the extreme integral generators of $C$.
\end{theorem}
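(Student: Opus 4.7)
The plan is to reduce to the case of a simplicial cone by triangulating $C$ with its extreme rays, to compute the Hilbert series of each simplicial piece by a parallelepiped tiling, and finally to glue the pieces together over a common denominator. Before doing any of this I would check that $H_M(t)$ is a well-defined element of $\ZZ[[t]]$: because $C$ is pointed and $\deg x>0$ for every nonzero $x\in M$, each graded component $\{x\in M:\deg x=i\}$ is finite and empty for $i<0$.

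Next I would invoke the existence of a triangulation of $C$ whose maximal simplicial subcones are generated by extreme integral generators of $C$ (see for instance \cite[Ch.~1]{BG}), and upgrade it to a \emph{half-open} decomposition so that $C$ is a \emph{disjoint} union of half-open simplicial cones $\widetilde{\sigma}_1,\ldots,\widetilde{\sigma}_r$. Then $H_M(t)=\sum_{k=1}^r H_{\widetilde{\sigma}_k\cap\ZZ^d}(t)$, and it suffices to put each summand into the desired form.

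For a half-open simplicial cone $\widetilde{\sigma}$ spanned by extreme integral generators $v_1,\ldots,v_d$ of $C$, every lattice point of $\widetilde{\sigma}$ can be written uniquely as $y+\sum_{i=1}^d a_i v_i$ with $y$ ranging over the finite set $E$ of lattice points of the corresponding half-open fundamental parallelepiped and $a_i\in\ZZ_+$. Summing $t^{\deg(\cdot)}$ along each coset gives
$$H_{\widetilde{\sigma}\cap\ZZ^d}(t)=\frac{Q_{\widetilde{\sigma}}(t)}{\prod_{i=1}^d(1-t^{\deg v_i})},\qquad Q_{\widetilde{\sigma}}(t):=\sum_{y\in E}t^{\deg y}\in\ZZ[t].$$

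Finally, since $\deg v_i$ divides $e$ for every extreme integral generator $v_i$, the factor $(1-t^e)/(1-t^{\deg v_i})=1+t^{\deg v_i}+\cdots+t^{e-\deg v_i}$ lies in $\ZZ[t]$; multiplying numerator and denominator of each piece by $\prod_{i=1}^d (1-t^e)/(1-t^{\deg v_i})$ and summing over $k$ produces the required presentation with $R(t)\in\ZZ[t]$ and denominator $(1-t^e)^d$. The main obstacle in writing up such a proof is the clean passage from an ordinary triangulation to a disjoint half-open decomposition --- a careful choice of generic linear form is needed to assign each boundary lattice point to a unique piece --- while the finiteness check, the parallelepiped tiling, and the denominator adjustment are routine once the half-open decomposition is in hand.
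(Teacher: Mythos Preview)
The paper does not prove this theorem: it is stated in the preliminaries (Section~2.2) as a classical result attributed to Hilbert, Serre, Ehrhart, and Stanley, with the reader referred to \cite{BG} and \cite{BH} for details. So there is no in-paper proof to compare against.

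Your argument is correct and is precisely the standard proof one finds in those references (see, e.g., \cite[Theorem~6.45 and its proof]{BG}): triangulate $C$ using only its extreme rays, pass to a disjoint half-open decomposition, write the lattice-point generating function of each half-open simplicial cone via the fundamental parallelepiped, and clear denominators using the divisibility $\deg v_i\mid e$. Your caveat about the half-open decomposition is well placed; the usual device is indeed a generic linear functional (or the inclusion--exclusion/M\"obius alternative), and the remaining steps are routine. One small point worth making explicit in a write-up is that the cone $C$ is assumed full-dimensional here (as the paper stipulates at the start of Section~2.2), so every maximal cone in the triangulation really does have $d$ generators and the denominator $(1-t^e)^d$ has the correct exponent.
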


Note that the above representation is not unique and that (in general) one can find denominators for $H_M(t)$ of much lower
degree than those in the theorem. This problem is discussed in Bruns, Ichim and S\"oger \cite[Section 4]{BIS}. 

An  equivalent statement can be given using the \emph{Hilbert function}
$$H(M,i)=\#\{x\in M: \deg x=i\}.$$

\begin{theorem}
There exists a quasipolynomial $Q$ with rational coefficients, degree $d-1$ and period $p$ dividing $e$ such that $H(M,i)=Q(i)$ for all $i\ge0$.
\end{theorem}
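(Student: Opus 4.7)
The plan is to extract the quasipolynomial $Q$ directly from the rational form of $H_M(t)$ supplied by the previous theorem. Write
$$
H_M(t)=\frac{R(t)}{(1-t^e)^d},\qquad R(t)=\sum_{j=0}^{s}r_j t^j\in\ZZ[t],
$$
and expand the denominator as the binomial series
$$
\frac{1}{(1-t^e)^d}=\sum_{k\ge 0}\binom{k+d-1}{d-1}t^{ek}.
$$
Comparing coefficients of $t^i$ on both sides yields the closed formula
$$
H(M,i)=\sum_{\substack{0\le j\le s\\ j\le i,\ e\mid i-j}}r_j\binom{(i-j)/e+d-1}{d-1}.
$$

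Next I would fix a residue $r\in\{0,1,\dots,e-1\}$ and examine the restriction of this sum to those $i\ge 0$ with $i\equiv r\pmod e$. Only the indices $j\equiv r\pmod e$ survive, and for each such $j$ the factor $\binom{(i-j)/e+d-1}{d-1}$ is, by its polynomial definition, a polynomial in $i$ of degree $d-1$ with rational coefficients. Summing these finitely many polynomials produces $Q_r(i)\in\QQ[i]$ of degree at most $d-1$, and setting $Q(i):=Q_r(i)$ whenever $i\equiv r\pmod e$ defines a quasipolynomial of period dividing $e$ with rational coefficients. A short verification, using the standard presentation in which $\deg R<ed$ together with the vanishing of the polynomial binomial $\binom{n+d-1}{d-1}$ at $n\in\{-1,-2,\dots,-(d-1)\}$, shows that the identity $H(M,i)=Q(i)$ extends to every $i\ge 0$ (the apparent extra contributions from $j>i$ simply evaluate to zero).

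The main obstacle is to show that the degree of $Q$ is exactly $d-1$ rather than strictly smaller. Reading off the leading coefficient of $Q_r$ as a polynomial in $i$ gives
$$
\frac{1}{(d-1)!\,e^{d-1}}\sum_{\substack{0\le j\le s\\ j\equiv r\pmod e}}r_j,
$$
and summing these over $r=0,\dots,e-1$ yields $R(1)/((d-1)!\,e^{d-1})$. Therefore $\deg Q=d-1$ provided $R(1)\ne 0$, equivalently provided $H_M(t)$ has a pole of order exactly $d$ at $t=1$. This last property is forced by the full-dimensionality $\dim C=d$: the cumulative lattice point count $\sum_{j\le i}H(M,j)$ equals the number of lattice points of the bounded $d$-dimensional polytope $C\cap\{x:\deg x\le i\}$, which grows as $\Theta(i^d)$ by comparison with its volume, so $H(M,i)$ itself must grow as $\Theta(i^{d-1})$ and a lower-order pole at $t=1$ is impossible. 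Combining these observations finishes the proof.
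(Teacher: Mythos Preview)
The paper does not supply a proof of this statement; it is quoted as a known background result in the preliminaries (Section~2.2), with pointers to \cite{BG} and \cite{BH} for details. So there is no paper proof to compare against, and I simply assess your argument on its own.

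Your approach is the standard one and most of it is correct, but there is one genuine gap. You invoke ``the standard presentation in which $\deg R<ed$'' as though it were freely available, but once the denominator $(1-t^e)^d$ is fixed the numerator $R(t)=H_M(t)(1-t^e)^d$ is uniquely determined---there is no freedom to \emph{choose} a low-degree representative. The bound $\deg R<ed$ is exactly the statement that $H_M(t)$ has negative degree as a rational function, and for $M=C\cap\ZZ^d$ this is a substantive theorem: it is equivalent to Ehrhart--Macdonald--Stanley reciprocity
\[
(-1)^d H_M(1/t)=\sum_{x\in\operatorname{int}(C)\cap\ZZ^d}t^{\deg x},
\]
or, on the algebraic side, to Hochster's theorem that $K[M]$ is Cohen--Macaulay together with the negativity of its $a$-invariant. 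Without this input your binomial-vanishing trick can fail for indices $j$ with $j-i\ge ed$, and you would obtain only $H(M,i)=Q(i)$ for $i\gg0$; the extension down to all $i\ge0$ is precisely where the normality of $M$ enters and is the nontrivial part of the theorem. The remaining steps---the binomial expansion, the residue-class decomposition, and the growth comparison $\sum_{j\le i}H(M,j)=\Theta(i^d)$ forcing $R(1)\ne0$---are fine.
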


We recall that a function $Q:\ZZ\to\QQ$ is called a \emph{quasipolynomial} of
\emph{degree} $u$ if
$$
Q(i)=a_u(i)i^u+a_{u-1}(i)i^{u-1}+\cdots+a_1(i)i+a_{0}(i),
$$
where $a_{\ell}:\ZZ\to\QQ$ is a periodic function for $\ell=0,\dots,u$, and
$a_u\neq 0$. The \emph{period} of $Q$ is the smallest  positive
integer $p$ such that $a_{\ell}(i+jp)=a_{\ell}(i)$
for all $i,j\in \ZZ$ and $\ell=0,\dots,u$.

The cone $C$ together with the grading $\deg$ define the rational
polytope 
\[
\mathscr{P}=\mathscr{P}_{C}=C\cap \{x\in \RR^d:\deg x=1\}.
\]
The Hilbert series is precisely the Ehrhart series of
$\mathscr{P}$, see also \cite[6.51]{BG}.

It is not hard to show that the $p$ polynomial components of $Q$ have the same degree $d-1$ and the same leading coefficient
$a_{d-1}=\frac{\vol(\mathscr{P})}{(d-1)!},$
where $\vol(\mathscr{P})$ is the lattice normalized volume of $\mathscr{P}$ (a lattice simplex of smallest possible volume has volume $1$). 
The parameter $e(M)=\vol(\mathscr{P})=a_{d-1}(d-1)!$ is called the \emph{multiplicity} of $M$.

The reader interested 
in further details about Hilbert bases, Hilbert series and Hilbert function is referred to the books of 
Bruns and Gubeladze \cite{BG} and Bruns and Herzog \cite{BH}.

\section{Score sheets}\label{sec:scoresheets}

In this section we introduce the terminology on score sheets of round-robin football tournaments that we use in the following and we describe the (easy) general case.

\subsection{The general case}

A \emph{score sheet} $S$ of a round-robin football tournament played between $n$ teams $T_1,\dots,T_n$ looks like the following table:
$$
\begin{array}{c|c c c c}
&T_1&T_2& \cdots &T_n\\
\hline
T_1&\ast&g_{12}&\ldots&g_{1n}\\
T_2&g_{21}&\ast&\ldots&g_{2n}\\
\vdots&\vdots&&\ddots&\vdots\\
T_n&g_{n1}&g_{n2}&\ldots&\ast\\
\end{array}
$$
where $g_{ij} \in \mathbb{Z}_{+}$ and $g_{ij}$ represents the number of goals that the team $T_i$ scored the team $T_j$.
We denote by $\mathscr{S}_n$ the set of all score sheets of a round-robin football tournament played between $n$ teams.
Remark that an entry $g_{ij}$ may be any nonnegative integer.

Further, we denote by $g_i$ the total number of goals scored by $T_i$, i.e.
$$
g_i=\sum_{\substack{j=1\\i\neq j}}^{n} g_{ij}.
$$
We also set $g_S=(g_1, \ldots, g_n)$, and the $\ZZ$-grading $\deg S=|\!|S|\!|=g_1+\cdots + g_n$.

\begin{example}
If we take the results of the group H in the South Africa FIFA World Cup 2010, the corresponding score table alphabetically ordered is
\small
$$
\begin{array}{c|c c c c}
&\mathrm{Chile}&\mathrm{Honduras}& \mathrm{Spain} &\mathrm{Switzerland}\\
\hline
\mathrm{Chile}&\ast&1&1&1\\
\mathrm{Honduras}&0&\ast&0&0\\
\mathrm{Spain}&2&2&\ast&0\\
\mathrm{Switzerland}&0&0&1&\ast
\end{array}
$$
\normalsize
\end{example}

It is easy to see that the pointwise addition of two score sheets $S_1, S_2\in \mathscr{S}_n $ is again an element of $\mathscr{S}_n$.
Thus $\mathscr{S}_n$ has a monoid structure and we can identify
$$
\mathscr{S}_n\backsimeq \ZZ_{+}^{n^2-n}.
$$
Obviously the monoid $\mathscr{S}_n$ has the same Hilbert basis as the monoid $\ZZ_{+}^{n^2-n}$. It is given by the set of the score sheets with the following properties:
\begin{enumerate}
\item There exist $1\le i,j\le n$ such that the team $T_i$ has scored exactly one goal against team $T_j$ and no goal against other team;
\item For all $k\not=i$ the teams $T_{k}$ have scored no goal against other team.
\end{enumerate}

Further, one may count scoring tables the same way that elements in $\ZZ_{+}^{n^2-n}$ are counted.
Set $\deg: \ZZ_{+}^{n^2-n}\to\ZZ$ to be the sum of the components, that is
$$\deg(S)=|\!|S|\!|.$$
Then we get that the number of score sheets with total number of goals $G$, i.~e.
$$
\#\{S\in \mathscr{S}_n: |\!|S|\!|=G\},
$$
is given by the Hilbert function $H:\mathbb{Z}_{+} \to \mathbb{Z}_{+}$ of the monoid $\ZZ_{+}^{n^2-n}$. Moreover, the monoids have the same multiplicity.
Using well known facts about the monoid $\ZZ_{+}^{n^2-n}$, we easily obtain the following theorem:

\begin{theorem}
Let $G=\sum_{\substack{i,j=1\\i\neq j}}^{n} g_{ij}$ be the total number of goals in a round-robin football tournament played between $n$ teams. Then  for the number of scoring tables we obtain:
\begin{align*}
& {\bf (1)} \quad \#\{S\in \mathscr{S}_n: |\!|S|\!|=G\}={n^2-n+G-1\choose G};\\
& {\bf (2)} \quad \#\{S\in \mathscr{S}_n: |\!|S|\!|\le G\}={n^2-n+G\choose G}.
\end{align*}
Moreover, $e(\mathscr{S}_n)=1$. This may be further interpreted as
$$
\lim_{G\rightarrow\infty}\frac{\#\{S\in \mathscr{S}_n: |\!|S|\!|=G\}}{G^{n^2-n-1}}=\frac{1}{(n^2-n-1)!}.
$$
\end{theorem}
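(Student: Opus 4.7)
The plan is to exploit the monoid isomorphism $\mathscr{S}_n\iso\ZZ_+^{n^2-n}$ already recorded in the text, which identifies a score sheet with its $n(n-1)$-tuple of off-diagonal entries $(g_{ij})_{i\neq j}$, and sends $\|S\|$ to the coordinate sum. Under this identification the grading on $\mathscr{S}_n$ becomes the standard total-degree grading on $\ZZ_+^{n^2-n}$, so counting score sheets of total goal-count $G$ is the same as counting nonnegative integer solutions of $\sum_{i\neq j}g_{ij}=G$. This is how each step is reduced to a textbook computation.

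For part (1), I would invoke the classical stars-and-bars count: the number of nonnegative integer solutions of $x_1+\cdots+x_d=G$ is $\binom{d+G-1}{G}$. Taking $d=n^2-n$ yields the asserted formula. For part (2), I would either sum part (1) over $0\le k\le G$ and apply the hockey-stick identity $\sum_{k=0}^{G}\binom{d-1+k}{k}=\binom{d+G}{G}$, or equivalently interpret $\#\{S:\|S\|\le G\}$ as counting nonnegative integer solutions of $g_{12}+\cdots+g_{n,n-1}+y=G$ with an extra slack variable $y$, which is again stars-and-bars with $d+1=n^2-n+1$ unknowns.

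For the multiplicity statement, I would expand $\binom{n^2-n+G-1}{G}$ as a polynomial in $G$ of degree $n^2-n-1$: its leading coefficient is $\tfrac{1}{(n^2-n-1)!}$, so by the discussion preceding the theorem the Hilbert quasipolynomial is an honest polynomial and $e(\mathscr{S}_n)=(n^2-n-1)!\cdot\tfrac{1}{(n^2-n-1)!}=1$. The asymptotic limit then follows immediately from division of leading terms. Alternatively one can note that the underlying polytope $\mathscr{P}_{\mathscr{S}_n}$ is the standard unit simplex in $\RR^{n^2-n}$, whose lattice normalized volume equals $1$.

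There is no real obstacle here: the whole theorem is a transcription of the most elementary facts about the free commutative monoid $\ZZ_+^d$, and the only ``work'' is to make sure one applies the binomial identities with $d=n^2-n$ in the right places. The only minor care-point is distinguishing the ordinary binomial coefficient from the polynomial in $G$ of degree $d-1$ when reading off the leading coefficient for the multiplicity.
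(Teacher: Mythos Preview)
Your proposal is correct and matches the paper's approach: the paper does not actually give a proof of this theorem, stating only that it follows from ``well known facts about the monoid $\ZZ_{+}^{n^2-n}$'' after the identification $\mathscr{S}_n\iso\ZZ_+^{n^2-n}$, and your write-up is precisely a careful spelling out of those facts (stars-and-bars for (1), the hockey-stick/slack-variable argument for (2), and reading off the leading coefficient for the multiplicity and limit).
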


\subsection{Ordered score sheets}

We say that a score sheet $S\in \mathscr{S}_n$ of a round-robin football tournament played between $n$ teams is an \emph{ordered score sheet} if it satisfies the following inequalities:
$$
\sum_{\substack{j=1\\i\neq j}}^{n} g_{i,j} \geq \sum_{\substack{j=1\\i+1\neq j}}^{n} g_{i+1,j} \ \ \ \forall i=1,\dots,n-1,
$$
or, equivalently, the inequalities $g_1\ge g_2 \ge \cdots\ge g_n$. We remark that this is in fact the form in which most score sheets are presented.
It is easy to see that the set of the ordered score sheets of a round-robin football tournament played between $n$ teams is a submonoid of $\mathscr{S}_n$,
which we denote in the following by  $\mathscr{M}_n$.

\begin{remark} There are exactly $n!$ total orders that may be defined on the set $g_1,\ldots, g_n$.
We have chosen the natural one in order to define $\mathscr{M}_n$.
\end{remark}

\begin{example}\label{ex:orderedsheet}
The ordered score sheet of the group H in the FIFA World Cup 2010 is the following:
\small
$$
\begin{array}{c|c c c c}
&\mathrm{Spain}&\mathrm{Chile}& \mathrm{Switzerland} &\mathrm{Honduras}\\
\hline
\mathrm{Spain}&\ast&2&0&2\\
\mathrm{Chile}&1&\ast&1&1\\
\mathrm{Switzerland}&1&0&\ast&0\\
\mathrm{Honduras}&0&0&0&\ast
\end{array}
$$
\normalsize
\end{example}


\section{The Hilbert basis of the monoid $\mathscr{M}_n$}

In this section we present a general description of the Hilbert basis of the monoid $\mathscr{M}_n$ of the ordered score sheets of a round-robin football tournament played between $n$ teams.

\begin{theorem}\label{theo:HB}
Let $i\in \ZZ_+$, with $1\le i \le n$. The Hilbert basis of the monoid $\mathscr{M}_n$ is given by the set of ordered score sheets with the following properties:
\begin{enumerate}
\item The teams $T_1, \ldots , T_i$ have scored exactly one goal against any other team;
\item The teams $T_{i+1}, \ldots, T_n$ have scored no goal against other team.
\end{enumerate}
Moreover, the number of elements in  the Hilbert basis is given by the following formula:
\[
\#\HB(\mathscr{M}_n)=\sum_{i=1}^n (n-1)^{i}.
\]
\end{theorem}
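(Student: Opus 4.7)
The plan is to apply Corollary \ref{cor:HB}. Let $\mathcal{B}$ denote the set described in the statement: for each $i\in\{1,\ldots,n\}$ and each tuple of targets $(t_1,\ldots,t_i)$ with $t_k\in\{1,\ldots,n\}\setminus\{k\}$, the element $H^{(i)}(t_1,\ldots,t_i)\in\mathcal{B}$ is the score sheet whose entry at position $(k,t_k)$ is $1$ for $1\le k\le i$ and whose remaining entries are $0$. Each such $H$ lies in $\mathscr{M}_n$ because its row sums are $1,\ldots,1,0,\ldots,0$ (with $i$ ones followed by $n-i$ zeros), a weakly decreasing sequence. It remains to verify the two conditions of the corollary: that $\mathcal{B}$ generates $\mathscr{M}_n$, and that no two distinct elements of $\mathcal{B}$ reduce each other.

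For generation, I would induct on $|\!|S|\!|$. Given $S\in\mathscr{M}_n$ with $|\!|S|\!|>0$ and row sums $g_1\ge \cdots \ge g_n$, let $i$ be the largest index with $g_i>0$. Then rows $1,\ldots,i$ are all nonzero, so we may choose targets $t_k\neq k$ with $S_{k,t_k}\ge 1$ for every such $k$, and set $H:=H^{(i)}(t_1,\ldots,t_i)$. The difference $S-H$ has nonnegative integer entries, and its row sums are $g_k-1$ for $k\le i$ and $0$ for $k>i$; since $g_i\ge 1$, this sequence remains weakly decreasing, so $S-H\in\mathscr{M}_n$. Because $|\!|S-H|\!|=|\!|S|\!|-i<|\!|S|\!|$, the inductive hypothesis writes $S-H$ as a sum of elements of $\mathcal{B}$, completing the step.

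For irreducibility I would check case by case that whenever $H,H'\in\mathcal{B}$ with parameters $i,i'$ are distinct, $H-H'\notin\mathscr{M}_n$. If $i'>i$, then $H'$ carries a $1$ in some row exceeding $i$ where $H$ vanishes, producing a negative entry; if $i'=i$ but the target tuples differ at some position $k$, then entry $(k,t_k')$ of $H-H'$ is negative; if $i'<i$ and $H\ge H'$ componentwise (otherwise the previous argument already applies), then $t_k=t_k'$ for $k\le i'$ and the row sums of $H-H'$ are $0$ for $k\le i'$, then $1$ for $i'<k\le i$, then $0$, which violates the monotonicity condition defining $\mathscr{M}_n$. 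For the count, for each $i\in\{1,\ldots,n\}$ there are $(n-1)^i$ independent choices of $(t_1,\ldots,t_i)$, so $\#\HB(\mathscr{M}_n)=\sum_{i=1}^n (n-1)^i$. The main obstacle is the irreducibility case $i'<i$: here componentwise nonnegativity alone does not rule out the difference, and one must invoke the order constraint $g_1\ge\cdots\ge g_n$ to discard it.
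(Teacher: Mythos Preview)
Your proof is correct and follows essentially the same approach as the paper's: both apply Corollary~\ref{cor:HB}, establish generation by induction on $|\!|S|\!|$ via subtraction of a suitable element of $\mathcal{B}$, and verify irreducibility by comparing the row-sum vectors $p_i=(1,\ldots,1,0,\ldots,0)$. Your case analysis for irreducibility is slightly more explicit (you split the case $i'<i$ into the sub-cases $H\ge H'$ and $H\not\ge H'$, whereas the paper argues directly via the row-sum vector $p_i-p_j$), and you verify more carefully that $S-H$ remains in $\mathscr{M}_n$, but the underlying ideas are identical.
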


\begin{proof}
Let $B_n$ be the subset of $\mathscr{M}_n$ containing all score sheets $S$ such that  the teams $T_1, \ldots , T_i$ have scored exactly one goal against any other team, and the teams $T_{i+1}, \ldots, T_n$ have scored no goal against other team for $i=1,\ldots,n$; or, in other words, the subset of the score sheets $S$ where the vector $g_S$ is one of the following:
\begin{align*}
p_1&=(1,0,0,\ldots , 0)\\
p_2&=(1,1,0,\ldots , 0)\\
p_3&=(1,1,1,\ldots , 0)\\
\dots&\ldots \ldots \ldots \ldots \ldots \dots  \\
p_n&=(1,1,1,\ldots , 1)
\end{align*}
We have to show that $B_n$ is the Hilbert basis of $\mathscr{M}_n$. Using Corollary \ref{cor:HB}, we have to prove that the following is true:
\begin{enumerate}
\item each element in $\mathscr{M}_n$ can be written as a linear combination with positive coefficients of elements of $B_n$;
\item for every distinct $A, B \in B_n$, the difference $A-B \notin \mathscr{M}_n$.
\end{enumerate}

\noindent \textbf{(1)} Let $S \in \mathscr{M}_n$, then there is a unique writing
\[
g_S=\sum_{i=1}^{n} a_ip_i \ \ \mbox{~with~\ } a_i \in \mathbb{Z}_{+}.
\]
We may assume that $a_r \neq 0$; this means that for $i=1, \ldots, r$ we may associate $j_i$ such that $g_{i,j_i} \neq 0$. We consider $H \in B_n$ which has precisely $1$ at the position $(i,j_i)$, and $0$ otherwise. Then $S-H \in \mathscr{M}_n$. Since $|\!|S-H|\!| < |\!|S|\!|$, the conclusion follows by induction.
\medskip

\noindent \textbf{(2)} Let $A, B \in B_n$, and consider the associated vectors $g_A$ and $g_B$. Then we have that $g_A,\ g_B\in \{p_1,\ldots, p_n\}$ and we may identify $g_A=p_i$ and $g_B=p_j$. It follows that $g_{A-B}=g_A-g_B=p_i-p_j.$
Let us assume that $A-B \in \mathscr{M}_n$. If $i<j$, then $p_i-p_j\not\in \ZZ_{+}^{n}$, so $A-B$ is not a score sheet. If $i>j$, then the first entry of $g_{A-B}$ is $0$, while the $i$-th entry is $1$, so $A-B$ is not an ordered score sheet. The only possibility left is that $g_A=g_B$, so $g_{A-B}=(0,\ldots,0)$. On each line both $A$ and $B$ have exactly one non zero entry, therefore we deduce that $A$ and $B$ coincide entrywise.
\end{proof}

\begin{example}
For $n=3$ the Hilbert basis of the monoid consists of 14 elements, namely:
\small
$$
\begin{array}{cccc}
\begin{array}{c|c c c}
&T_1&T_2 &T_3\\
\hline
T_1&\ast&1&0\\
T_2&0&\ast &0\\
T_3&0&0&\ast
\end{array}
&
\begin{array}{c|c c c}
&T_1&T_2 &T_3\\
\hline
T_1&\ast&0&1\\
T_2&0&\ast &0\\
T_3&0&0&\ast
\end{array}
&
\begin{array}{c|c c c}
&T_1&T_2 &T_3\\
\hline
T_1&\ast&1&0\\
T_2&1&\ast &0\\
T_3&0&0&\ast
\end{array}
&
\begin{array}{c|c c c}
&T_1&T_2 &T_3\\
\hline
T_1&\ast&1&0\\
T_2&0&\ast &1\\
T_3&0&0&\ast
\end{array}
\end{array}
$$
$$
\begin{array}{cccc}
\begin{array}{c|c c c}
&T_1&T_2 &T_3\\
\hline
T_1&\ast&0&1\\
T_2&1&\ast &0\\
T_3&0&0&\ast
\end{array}
&
\begin{array}{c|c c c}
&T_1&T_2 &T_3\\
\hline
T_1&\ast&0&1\\
T_2&0&\ast &1\\
T_3&0&0&\ast
\end{array}
&
\begin{array}{c|c c c}

&T_1&T_2 &T_3\\
\hline
T_1&\ast&1&0\\
T_2&1&\ast &0\\
T_3&1&0&\ast
\end{array}
&
\begin{array}{c|c c c}
&T_1&T_2 &T_3\\
\hline
T_1&\ast&1&0\\
T_2&1&\ast &0\\
T_3&0&1&\ast
\end{array}
\end{array}
$$
$$
\begin{array}{cccc}
\begin{array}{c|c c c}
&T_1&T_2 &T_3\\
\hline
T_1&\ast&1&0\\
T_2&0&\ast &1\\
T_3&1&0&\ast
\end{array}
&
\begin{array}{c|c c c}
&T_1&T_2 &T_3\\
\hline
T_1&\ast&1&0\\
T_2&0&\ast &1\\
T_3&0&1&\ast
\end{array}
&
\begin{array}{c|c c c}
&T_1&T_2 &T_3\\
\hline
T_1&\ast&0&1\\
T_2&1&\ast &0\\
T_3&1&0&\ast
\end{array}
&
\begin{array}{c|c c c}
&T_1&T_2 &T_3\\
\hline
T_1&\ast&0&1\\
T_2&1&\ast &0\\
T_3&0&1&\ast
\end{array}
\end{array}
$$
$$
\begin{array}{cc}
\begin{array}{c|c c c}
&T_1&T_2 &T_3\\
\hline
T_1&\ast&0&1\\
T_2&0&\ast &1\\
T_3&1&0&\ast
\end{array}
&
\begin{array}{c|c c c}
&T_1&T_2 &T_3\\
\hline
T_1&\ast&0&1\\
T_2&0&\ast &1\\
T_3&0&1&\ast
\end{array}
\end{array}
$$
\normalsize
\end{example}

\begin{example}
For $n=4$ there are 120 elements in the Hilbert basis. We do not present all of them  due to space constraints, but we would still like to point out the following:
If we consider the ordered score sheet in Example \ref{ex:orderedsheet} we may easily compute the following decomposition:
\small
$$
\begin{array}{ccccc}
\begin{array}{c|c c c c}
&\mathrm{Sp}&\mathrm{Ch}& \mathrm{Sw} &\mathrm{Hon}\\
\hline
\mathrm{Sp}&\ast&2&0&2\\
\mathrm{Ch}&1&\ast&1&1\\
\mathrm{Sw}&1&0&\ast&0\\
\mathrm{Hon}&0&0&0&\ast
\end{array}
&
=
&
\begin{array}{|c c c c|}
\hline
\ast&0&0&1\\
0&\ast&0&1\\
1&0&\ast&0\\
0&0&0&\ast\\
\hline
\end{array}
&
+
&
\begin{array}{|c c c c|}
\hline
\ast&1&0&0\\
1&\ast&0&0\\
0&0&\ast&0\\
0&0&0&\ast\\
\hline
\end{array}
\\
&&&&
\\
&
+
&
\begin{array}{|c c c c|}
\hline
\ast&1&0&0\\
0&\ast&1&0\\
0&0&\ast&0\\
0&0&0&\ast \\
\hline
\end{array}
&
+
&
\begin{array}{|c c c c|}
\hline
\ast&0&0&1\\
0&\ast&0&0\\
0&0&\ast&0\\
0&0&0&\ast\\
\hline
\end{array}
\end{array}
$$
\normalsize
\end{example}

We close the section with a the following Corollary, whose proof was pointed out by Winfried Bruns.

\begin{corollary}\label{cor:extremerays}
The Hilbert basis of $\mathscr{M}_n$ equals the set of extreme integral generators of the cone $C_n = \RR_+\mathscr{M}_n$.
\end{corollary}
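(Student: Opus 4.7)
One inclusion is standard for any positive affine monoid $M = C \cap \ZZ^d$: an extreme integral generator $v$ is the primitive lattice point on an extreme ray, so any decomposition $v = y + z$ inside $M$ forces both summands to lie on $\RR_+ v$, and primitivity then precludes a nontrivial splitting. The real content is the converse: I must show that every $H \in \HB(\mathscr{M}_n)$ spans an extreme ray of $C_n$.

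Fix $H \in \HB(\mathscr{M}_n)$. By Theorem \ref{theo:HB}, there exist an integer $i \in \{1, \ldots, n\}$ and columns $c_1, \ldots, c_i$ with $c_r \neq r$ such that $H_{r, c_r} = 1$ for $r \leq i$ and all other entries of $H$ vanish. My plan is to take an arbitrary decomposition $H = y + z$ with $y, z \in C_n$ and argue that both summands are scalar multiples of $H$. First, since $y, z \geq 0$ entrywise and $y_{rs} + z_{rs} = H_{rs} \in \{0, 1\}$, both $y$ and $z$ are supported on the positions $\{(r, c_r) : r \leq i\}$; consequently the row sums are $g_r^y = y_{r, c_r}$ for $r \leq i$ and $g_r^y = 0$ for $r > i$, with the symmetric formula for $z$.

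The cone conditions $y, z \in C_n$ then yield the two chains
\[
y_{1, c_1} \geq y_{2, c_2} \geq \cdots \geq y_{i, c_i} \geq 0, \qquad 1 - y_{1, c_1} \geq 1 - y_{2, c_2} \geq \cdots \geq 1 - y_{i, c_i} \geq 0,
\]
the first of which expresses that $y_{r, c_r}$ is weakly decreasing in $r$, the second that it is weakly increasing; together they force $y_{r, c_r}$ to be constant in $r$, say equal to $\lambda \in [0, 1]$. Hence $y = \lambda H$ and $z = (1-\lambda) H$, proving that $\RR_+ H$ is an extreme ray of $C_n$; since the entries of $H$ are coprime (they are $0$ or $1$ with at least one $1$), $H$ itself is the extreme integral generator on that ray. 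I do not anticipate a serious obstacle; the only point that merits explicit mention is that the ordering constraints of $C_n$ that do not appear in the chains above (namely $g_i \geq g_{i+1}$ and the subsequent inequalities among later coordinates) are automatic from the support condition, as both sides of each of these inequalities are already zero in $y$ and in $z$.
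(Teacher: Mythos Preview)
Your argument is correct, and it takes a different route from the paper's proof. The paper argues by contradiction: assuming that some $S\in\HB(\mathscr{M}_n)$ is not an extreme integral generator, it writes $S$ as a nonnegative rational combination of extreme integral generators (which, by the standard inclusion, are themselves Hilbert basis elements), observes that every summand with nonzero coefficient must have its support inside $\supp(S)$, and then notes that the summand hitting the last nonzero row of $S$ can only be $S$ itself---a contradiction. Your approach instead verifies the extreme-ray condition directly: for an arbitrary decomposition $H=y+z$ in $C_n$ you use the ordering constraints on $y$ and on $z=H-y$ simultaneously to sandwich the row values $y_{r,c_r}$ between a weakly decreasing chain and a weakly increasing chain, forcing them to be constant. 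This is cleaner in that it does not rely on the reverse inclusion as an intermediate step, nor on comparing $S$ with other Hilbert basis elements; the paper's proof, on the other hand, exploits the combinatorial rigidity of the Hilbert basis description more explicitly. Both hinge on the same underlying fact---that the support of any cone element dominated by $H$ is contained in $\supp(H)$---but your ``squeeze'' argument extracts the extremality from it more transparently.
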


\begin{proof}
We first remark that the inclusion ``$\supseteq$'' is always true. We have to show the  inclusion ``$\subseteq$''. Let $S\in\HB(\mathscr{M}_n)$ be an element of the Hilbert basis and suppose it is not an extreme integral generator.
Then it may be written as a linear combination  of the extreme integral generators with nonnegative \emph{rational} coefficients.  (By the first remark, this is in fact a linear combination of Hilbert basis elements.) There exists $1\le k\le n$ such that the element $S$ has exactly one entry $g^S_{ij_i}=1$ for each $1\le i\le k$ and no other nonzero entry. Any summand in the linear combination with nonzero coefficient can have nonzero entries only where $S$ has them. Moreover, there must be one summand $H$ with $g^H_{kj_k}=1$.  The only element in the Hilbert
 basis that satisfies these two conditions is $S$ itself! So it
 must appear in the linear combination, and this is a contradiction.
\end{proof}


\section{The multiplicity and the Hilbert series of the monoid $\mathscr{M}_n$}

This section will be concerned with the computation of two invariants of the affine monoid of the ordered score sheets of a round-robin football tournament played by $n$ teams, namely its multiplicity and Hilbert series, deriving interesting formulas for explicit calculations.

\subsection{The multiplicity of the monoid $\mathscr{M}_n$} A general formula for the multiplicity of $\mathscr{M}_n$ reads off as follows.

\begin{theorem}\label{theo:Mult}
The  multiplicity of the monoid $\mathscr{M}_n$ is given by the following formula:
\[
e(\mathscr{M}_n)=\frac{1}{n!}.
\]
We deduce that all polynomial components of quasipolynomial representing the Hilbert function of $\mathscr{M}_n$ have the same leading coefficient:
$$
\frac{1}{n!(n^2-n-1)!}.
$$
This may be further interpreted as
$$
\lim_{G\rightarrow\infty}\frac{\#\{S\in \mathscr{M}_n: |\!|S|\!|=G\}}{G^{n^2-n-1}}=\frac{1}{n!(n^2-n-1)!}.
$$
\end{theorem}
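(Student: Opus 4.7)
The plan is to exploit the natural action of the symmetric group $S_n$ on the ambient monoid $\mathscr{S}_n \cong \ZZ_+^{n^2-n}$. I would proceed as follows.

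First, I would observe that $\mathscr{M}_n$ has the same dimension as $\mathscr{S}_n$, namely $d = n^2 - n$, since the ordering condition cuts the full cone $\RR_+^{n^2-n}$ by linear inequalities that leave a full-dimensional subcone. Let $C_n = \RR_+\mathscr{M}_n$ and $\mathscr{P}_n = C_n \cap \{\deg = 1\}$, and recall that $e(\mathscr{M}_n) = \vol(\mathscr{P}_n)$ (lattice normalized volume). By the preceding section, the analogous polytope $\mathscr{P}_{\mathscr{S}_n}$ for the full monoid $\mathscr{S}_n$ is the standard simplex in $\RR^{n^2-n}$ with normalized volume $e(\mathscr{S}_n)=1$.

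Next, define an $S_n$-action on $\ZZ^{n^2-n}$ by permutation of team labels: for $\sigma \in S_n$, send the score sheet $(g_{ij})$ to $(g_{\sigma^{-1}(i)\sigma^{-1}(j)})$. This is a permutation of coordinates, hence a lattice isomorphism of $\ZZ^{n^2-n}$, and it preserves the $\ZZ$-grading $\deg S = \sum_{i\neq j} g_{ij}$. The image cone $\sigma \cdot C_n$ is the set of score sheets satisfying $g_{\sigma(1)} \geq g_{\sigma(2)} \geq \cdots \geq g_{\sigma(n)}$. The key geometric observation is that
\[
\RR_+^{n^2-n} = \bigcup_{\sigma \in S_n} \sigma \cdot C_n,
\]
because every score sheet has row sums that fall into some order, and moreover the intersection of any two distinct translates $\sigma\cdot C_n$ and $\tau\cdot C_n$ is contained in a finite union of hyperplanes of type $\{g_i = g_j\}$, hence has Lebesgue measure zero.

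Intersecting with $\{\deg = 1\}$, the standard simplex $\mathscr{P}_{\mathscr{S}_n}$ is decomposed into $n!$ translates $\sigma \cdot \mathscr{P}_n$ which pairwise overlap on measure-zero sets. Since each translate is a lattice image of $\mathscr{P}_n$, all have the same normalized volume, so
\[
1 = \vol(\mathscr{P}_{\mathscr{S}_n}) = \sum_{\sigma \in S_n} \vol(\sigma \cdot \mathscr{P}_n) = n!\cdot \vol(\mathscr{P}_n),
\]
giving $e(\mathscr{M}_n) = 1/n!$. The leading coefficient of the quasipolynomial $Q$ representing the Hilbert function is then $e(\mathscr{M}_n)/(d-1)! = 1/(n!(n^2-n-1)!)$, and since this coefficient is the same for every polynomial component of $Q$, the limit assertion is immediate from $H(\mathscr{M}_n, G) = Q(G)$.

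The main obstacle is verifying rigorously that the translates $\sigma \cdot C_n$ have pairwise intersection of strictly lower dimension; this is not merely a statement about orderings of the $g_i$ (which would be automatic) but about the full $(n^2-n)$-dimensional cones. However, two translates can agree on a positive-measure subset only if the defining inequalities $g_i \geq g_{i+1}$ (interpreted after applying $\sigma$ and $\tau$) are equivalent modulo the nonnegative-orthant constraints, and since the row sums $g_i$ are genuinely independent linear forms on $\RR^{n^2-n}$, distinct orderings yield cones whose interiors are disjoint. Once this is settled, the volume decomposition above is immediate and the theorem follows.
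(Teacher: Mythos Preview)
Your proposal is correct and follows essentially the same route as the paper: decompose $\RR_+^{n^2-n}$ into the $n!$ cones obtained by imposing each possible total order on the row sums $g_1,\dots,g_n$, observe that distinct pieces meet inside a hyperplane $\{g_i=g_j\}$ and hence have volume-zero overlap, and conclude $n!\cdot e(\mathscr{M}_n)=e(\mathscr{S}_n)=1$. Your phrasing via the explicit $S_n$-action by coordinate permutations (a lattice automorphism preserving the grading) is a slightly cleaner way to see that all $n!$ pieces have equal normalized volume, but otherwise the argument is the same; your final paragraph overcomplicates the overlap issue, since the containment $\sigma\cdot C_n\cap\tau\cdot C_n\subset\{g_i=g_j\}$ for suitable $i\neq j$ is immediate from the definitions and already forces lower dimension in $\RR^{n^2-n}$.
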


\begin{proof} There are exactly $n!$ total orders that may be defined on the set $g_1,\ldots, g_n$.
To each total order $\le_k$ we can attach a monoid of ordered score sheets $\mathscr{M}^k_n$ analogous to the way $\mathscr{M}_n$ was defined. It is clear that
$$
\mathscr{S}_n=\bigcup_{k=1}^{n!}\mathscr{M}^k_n.
$$

Let $C^k = \RR_+\mathscr{M}^k_n\subset\RR^{n^2-n}$ be the
cone generated by $\mathscr{M}^k_n$. It is clear that $\RR_{+}^{n^2-n}=\bigcup_{k=1}^{n!}C^k$. Consider two cones $C^{k_1}$ and $C^{k_2}$. Then there exist $1\le i,j\le n$ such that $g_{i}\le_{k_1} g_{j}$ and $g_{i}\ge_{k_2} g_{j}$. Therefore
$$
C^{k_1}\bigcap C^{k_2}\subset H_{g_i=g_j}.
$$
Elementary reasonings of measure theory show that, for all $1\le i,j\le n$,
$$\vol(\mathscr{P}_{C^{k_1}\bigcap C^{k_2}})=0.$$
By the inclusion-exclusion principle we obtain
\begin{align*}
\vol(\mathscr{P}_{\RR_{+}^{n^2-n}}) & =\sum_{k=1}^{n!} \vol(\mathscr{P}_{C^{k}})-\sum_{k_1\neq k_2}\vol(\mathscr{P}_{C^{k_1}\bigcap C^{k_2}})+\sum_{k_1\neq k_2\neq k_3\neq k_1}\vol(\mathscr{P}_{C^{k_1}\bigcap C^{k_2}\bigcap C^{k_3}})-\ldots\\
& =\sum_{k=1}^{n!} \vol(\mathscr{P}_{C^{k}}).
\end{align*}

Finally we deduce that
$$
n!e(\mathscr{M}_n)=\sum_{i=1}^{n!}e(\mathscr{M}^k_n)=e(\mathscr{S}_n)=1,
$$
which in turn implies the claimed formula.
\end{proof}


\subsection{Triangulations}\label{sec:triangulation} The study of the triangulations of a convex polytope is one of the most important research
areas in the theory of convex polytopes. The content of this subsection was inspired by a remark of Winfried Bruns.

Apart from the $\ZZ$-grading given by $\deg S=|\!|S|\!|=g_1+\cdots + g_n$ considered in Section \ref{sec:scoresheets}, there is another $\ZZ$-grading that is interesting, namely
$$\deg_1 S=|\!|S|\!|_1=g_1.$$
Then, according to Corollary \ref{cor:extremerays},  every extreme integral generator  has degree $1$.
In other words: the monoid $\mathscr{M}_n$ is the Ehrhart
monoid of an integral polytope $\mathscr{P}_n$ (with vertices the extreme integral generators). Since $\mathscr{P}_n$ contains the Hilbert basis of $\mathscr{M}_n$, $\mathscr{P}_n$ is \emph{integrally closed}. A sufficient condition for this to happen is that $\mathscr{P}_n$ has a \emph{unimodular triangulation}, i.e all simplices in the triangulation have the smallest possible volume 1. In general integral polytopes do not admit a unimodular triangulation, but the ones that admit such a triangulation are important for integer programming. This is indeed the case here, since $\mathscr{P}_n$ is a \emph{compressed} polytope. The class of compressed polytopes was introduced by Stanley \cite{S1}.

\begin{proposition}\label{prop:compressed}
$\mathscr{P}_n$ is a compressed polytope. As a consequence, every pulling triangulation of $\mathscr{P}_n$ is unimodular.
\end{proposition}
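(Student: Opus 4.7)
The plan is to prove compressedness of $\mathscr{P}_n$ via the Ohsugi--Hibi/Sullivant ``2-level'' characterization: a lattice polytope $P$ is compressed if and only if for some (not necessarily irredundant) $H$-description $P=\bigcap_i\{x:\la a_i,x\ra\ge c_i\}$ with each $a_i$ primitive, every vertex of $P$ satisfies $\la a_i,v\ra\in\{c_i,c_i+1\}$. Granting this, the unimodularity of every pulling triangulation then follows from Stanley's original result \cite{S1} on compressed polytopes.

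First I would pin down an $H$-description of the cone $C_n=\RR_+\mathscr{M}_n$, namely
\[
C_n=\{M\in\RR^{n^2-n} : M_{ij}\ge 0\text{ for }i\neq j\text{ and }g_i(M)\ge g_{i+1}(M)\text{ for }i=1,\dots,n-1\}.
\]
The inclusion $\subseteq$ is immediate from Theorem \ref{theo:HB}. For the reverse inclusion, given $M$ on the right-hand side, set $h_i=g_i(M)-g_{i+1}(M)\ge 0$ (with $g_{n+1}=0$), partition each row $M_i$ into nonnegative pieces of sums $h_i,\dots,h_n$, and assemble these into matrices $N_k$ with row-sum pattern $(\underbrace{1,\dots,1}_{k},0,\dots,0)$. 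Each $N_k$ can then be expressed as a product-distribution convex combination of the Hilbert basis elements of ``type $p_k$'' from Theorem \ref{theo:HB}, giving $M=\sum_k h_k N_k\in C_n$. This verification of the $H$-description is the main technical step, but it is a routine polyhedral computation.

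Granted this $H$-description, the inequalities bounding $\mathscr{P}_n=C_n\cap\{g_1=1\}$ have primitive integer coefficients: either a coordinate functional $e_{ij}$, or the functional $g_i-g_{i+1}$, whose nonzero entries are $\pm 1$. By Corollary \ref{cor:extremerays} combined with Theorem \ref{theo:HB}, every vertex $v$ of $\mathscr{P}_n$ is a $0/1$-matrix whose nonzero rows contain exactly one $1$ and are stacked at the top. Consequently $\la e_{ij},v\ra=v_{ij}\in\{0,1\}$, and the orderedness of $v$ together with $g_i^v,g_{i+1}^v\in\{0,1\}$ forces $g_i^v-g_{i+1}^v\in\{0,1\}$ (the value $-1$ is excluded by orderedness). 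The 2-level criterion is thus satisfied on every inequality of the $H$-description, so $\mathscr{P}_n$ is compressed and both parts of the proposition follow.
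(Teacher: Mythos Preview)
Your argument is correct and follows essentially the same route as the paper: both verify that $\mathscr{P}_n$ satisfies the Ohsugi--Hibi/Sullivant ``width-one'' criterion via an explicit $H$-description, and then invoke \cite{OH} (or equivalently Sullivant's characterization) together with Stanley's result \cite{S1}.

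Two minor remarks. First, your detailed verification of the $H$-description of $C_n$ is unnecessary work: by definition $\mathscr{M}_n=\ZZ_+^{n^2-n}\cap\{g_1\ge\cdots\ge g_n\}$, and since the right-hand cone is rational and full-dimensional, $C_n=\RR_+\mathscr{M}_n$ equals $\RR_+^{n^2-n}\cap\{g_1\ge\cdots\ge g_n\}$ immediately (every rational point of the latter has an integral multiple in $\mathscr{M}_n$). Second, the paper's presentation is slightly more streamlined: rather than checking the values of the defining linear forms on the vertices (which requires appealing to Corollary~\ref{cor:extremerays}), it simply observes that on the slice $g_1=1$ the redundant two-sided bounds
\[
0\le g_{ij}\le 1,\qquad 0\le g_{i-1}-g_i\le 1
\]
are automatically valid, so $\mathscr{P}_n$ is literally the solution set of a system of the form $0\le\ell_k\le 1$, which is exactly the hypothesis of \cite[Theorem~1.1]{OH}. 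Your approach and the paper's are two phrasings of the same verification.
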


\begin{proof}
The $(0,1)$-polytope $\mathscr{P}_n$ consists of all solutions of the system of linear inequalities
\begin{align*}
g_1=1;& \\
0\le g_{i-1}-g_{i} \le 1& \ \ \ \forall \ i=2,\dots,n;\\
0\le g_{ij} \le 1& \ \ \ \forall \ 1\le i\not=j\le n.
\end{align*}

It follows directly from Ohsugi and Hibi \cite[Theorem 1.1]{OH} that the polytope $\mathscr{P}_n$ is compressed.
\end{proof}

Note that, on the one hand, a compressed polytope is not automatically a \emph{totally unimodular} polytope (i.e. a polytope with all triangulations unimodular).
On the other hand, it is true that a totally unimodular polytope is compressed. We will come back to this in Remark \ref{rem:tri}.


\subsection{The Hilbert series of the monoid $\mathscr{M}_n$}

The Hilbert series and quasipolynomials for the monoid of ordered score sheets of tournaments among 2, 3 and 4 teams may be easily computed on a normal computer using the software Normaliz \cite{Nmz}. Let us start with the simplest case $n=2$:

\begin{proposition}
The number of ordered score sheets $Q(G)$ in a two-team game with given number of goals $G$ is
$$
Q(G)=\left \{ \begin{array}{cc}
\frac{G+2}{2}& \mbox{if~} G ~\mbox{~even;}\\
\frac{G+1}{2}& \mbox{if~} G ~\mbox{~odd.}
\end{array}
\right.
$$
Therefore, $Q(G)$ is a quasipolynomial of degree $1$ and period $2$.
\end{proposition}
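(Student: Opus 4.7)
My plan is to proceed by direct enumeration, exploiting the fact that for $n=2$ a score sheet has only two entries. Write the sheet as
\[
\begin{array}{c|cc}
 & T_1 & T_2\\ \hline
T_1 & \ast & g_{12}\\
T_2 & g_{21} & \ast
\end{array}
\]
so that $g_1=g_{12}$ and $g_2=g_{21}$. The ordering condition $g_1\ge g_2$ becomes $g_{12}\ge g_{21}$, and the grading condition $\|S\|=G$ becomes $g_{12}+g_{21}=G$. Thus $Q(G)$ counts pairs $(a,b)\in\ZZ_+^2$ with $a+b=G$ and $a\ge b$.

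Setting $b=g_{21}$, the constraint $a\ge b$ together with $a=G-b$ is equivalent to $b\le G/2$, so $b$ ranges over $\{0,1,\ldots,\lfloor G/2\rfloor\}$. Hence $Q(G)=\lfloor G/2\rfloor+1$. Splitting into parities gives
\[
Q(G)=\begin{cases} G/2+1=(G+2)/2, & G \text{ even,}\\ (G-1)/2+1=(G+1)/2, & G \text{ odd,}\end{cases}
\]
which is precisely the formula claimed.

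Finally, I would observe that the two branches are polynomials of the same degree $1$ in $G$ with the same leading coefficient $1/2$, but with different constant terms ($1$ versus $1/2$), and that they are exchanged under $G\mapsto G+1$. This shows that $Q$ is a quasipolynomial of degree $1$ and period exactly $2$ (the period cannot be $1$ because the constant terms differ). There is no real obstacle here; the only point worth double-checking is the base case $G=0$, where we get $Q(0)=1$ (the zero sheet), consistent with $(0+2)/2$. As a consistency check, the leading coefficient $1/2$ agrees with the multiplicity $e(\mathscr{M}_2)=1/2!=1/2$ predicted by Theorem~\ref{theo:Mult}, since $n^2-n-1=1$ and $1/(n!(n^2-n-1)!)=1/2$.
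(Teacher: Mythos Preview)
Your argument is correct. The paper itself does not give a written proof of this proposition; it simply records the result as the output of a Normaliz computation (the sentence preceding the proposition announces that the cases $n=2,3,4$ ``may be easily computed on a normal computer using the software Normaliz''). Your direct enumeration---identifying an ordered $2$-team sheet with a pair $(a,b)\in\ZZ_+^2$ satisfying $a+b=G$ and $a\ge b$, and then counting $b\in\{0,\dots,\lfloor G/2\rfloor\}$---is an elementary self-contained argument that bypasses any software. It has the advantage of being human-checkable and of making the period-$2$ quasipolynomial structure transparent, and your consistency check against Theorem~\ref{theo:Mult} is a nice touch. The computational route, of course, scales to the larger $n$ where such direct counting is infeasible.
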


For $n=3$ we obtain:

\begin{proposition}
Let $Q(G)$ denote the number of ordered score sheets with given number of goals $G$. Then, $Q(G)$ is a quasipolynomial
of degree $5$ and period $6$. Further, the Hilbert series $\sum_{G=0}^{\infty} Q(G)t^G$ is
$$
\frac{1+ 4t^2 +5t^3 +5t^4+ 8t^5+ 22t^6 -3t^7+ 25t^8+ 12t^9+ 6t^{10}+ 9t^{11}+ 12t^{12} -4t^{13}+ 6t^{14}}
{(1-t)^2(1-t^3)(1-t^6)^3}.$$ Thus, in terms of a quasipolynomial formula we have
$$
Q(G)=\left \{ \begin{array}{ll}
1+\frac{241}{180}G  +\frac{103}{144}G^2 + \frac{125}{648} G^3 + \frac{5}{192} G^4 + \frac{1}{720} G^5& \mbox{~if~} G \equiv 0 \mod 6,\\
\\
\frac{77}{192}+\frac{589}{720}G  +\frac{55}{96}G^2 + \frac{13}{72} G^3 + \frac{5}{192} G^4 + \frac{1}{720}  G^5& \mbox{~if~} G \equiv 1 \mod 6,\\
\\
\frac{2}{3}+\frac{181}{180}G  +\frac{29}{48}G^2 + \frac{13}{72} G^3 + \frac{5}{192} G^4 + \frac{1}{720}  G^5& \mbox{~if~} G \equiv 2 \mod 6,\\
\\
\frac{47}{64}+\frac{829}{720}G  +\frac{197}{288}G^2 + \frac{125}{648} G^3 + \frac{5}{192} G^4 + \frac{1}{720}  G^5& \mbox{~if~} G \equiv 3 \mod 6,\\
\\
\frac{2}{3}+\frac{181}{180}G  +\frac{29}{48}G^2 + \frac{13}{72} G^3 + \frac{5}{192} G^4 + \frac{1}{720}  G^5& \mbox{~if~} G \equiv 4 \mod 6,\\
\\
\frac{77}{192}+\frac{589}{720}G  +\frac{55}{96}G^2 + \frac{13}{72} G^3 + \frac{5}{192} G^4 + \frac{1}{720}  G^5& \mbox{~if~} G \equiv 5 \mod 6.
\end{array}
\right.
$$
\end{proposition}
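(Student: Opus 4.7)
The plan is as follows. By Theorem~\ref{theo:HB} combined with Corollary~\ref{cor:extremerays}, the cone $C_3 = \RR_+\mathscr{M}_3 \subset \RR^6$ is pointed and $6$-dimensional, and its extreme integral generators are exactly the $14$ Hilbert basis elements written out in the preceding example. Under the grading $\deg S = g_1+g_2+g_3$ these rays split by layer: $2$ of degree $1$ (those with $g_S=p_1$), $4$ of degree $2$ (those with $g_S=p_2$), and $8$ of degree $3$ (those with $g_S=p_3$). Since $\lcm(1,2,3)=6$, the Hilbert--Serre--Ehrhart--Stanley theorem already guarantees that $H_{\mathscr{M}_3}(t)$ is a rational function expressible with denominator a power of $(1-t^6)$, and hence that $Q(G)$ is a quasipolynomial of degree $d-1=5$ and period dividing $6$.

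To pin down the explicit rational function I would triangulate $C_3$ into simplicial subcones $\sigma_1,\ldots,\sigma_N$ (for instance by a placing or a pulling triangulation). For each $\sigma_j$ spanned by integral rays of degrees $e_1^{(j)},\ldots,e_6^{(j)}$, the Ehrhart series is
\[
H_{\sigma_j}(t)=\frac{\sum_{x\in P_j} t^{\deg x}}{\prod_{i=1}^{6}\bigl(1-t^{e_i^{(j)}}\bigr)},
\]
where $P_j$ denotes the half-open fundamental parallelepiped of $\sigma_j$. Summing these series (either with inclusion--exclusion corrections on common faces, or, more cleanly, via a disjoint decomposition into half-open simplicial cones) produces $H_{\mathscr{M}_3}(t)$; reducing over the common denominator $(1-t)^2(1-t^3)(1-t^6)^3$ and collecting terms yields the numerator $R(t)$ displayed in the statement. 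The main obstacle is precisely this triangulation step: with $14$ generating rays in dimension $6$, a direct hand enumeration is unwieldy because many simplicial subcones carry nontrivial parallelepiped contributions; this is why in practice the computation is delegated to Normaliz~\cite{Nmz}, and a rigorous certification rests on the correctness of that implementation.

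Finally, given $H_{\mathscr{M}_3}(t)$ in closed form, the quasipolynomial comes out by a standard partial fraction decomposition over the cyclotomic factorisation of the denominator: the residues at the primitive $k$-th roots of unity ($k\mid 6$) re-assemble, after grouping by the value of $G$ modulo $6$, into the six polynomial pieces displayed in the proposition. Two easy consistency checks are available: first, each piece has leading coefficient $\tfrac{1}{720}=\tfrac{1}{3!\,(3^2-3-1)!}$, which agrees with Theorem~\ref{theo:Mult}; and second, the piece at $G\equiv 0\pmod{6}$ must satisfy $Q(0)=1$, corresponding to the empty score sheet.
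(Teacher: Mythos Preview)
Your proposal is correct and matches the paper's approach: the paper offers no proof beyond the blanket statement that ``the Hilbert series and quasipolynomials for the monoid of ordered score sheets of tournaments among 2, 3 and 4 teams may be easily computed on a normal computer using the software Normaliz,'' and you likewise defer the explicit computation to Normaliz while supplying the theoretical scaffolding (degree $5$, period dividing $6$, leading coefficient $1/720$) that the paper leaves implicit. One small point worth adding for completeness: your Hilbert--Serre argument only gives period \emph{dividing} $6$, so to certify that the period is exactly $6$ you should remark that the six displayed polynomial pieces are not all equal (indeed the constant terms at $G\equiv 0$ and $G\equiv 3$ already differ).
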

\medskip

The case $n=4$ involves already big numbers:

\begin{proposition}
Let $Q(G)$ denote the number of ordered score sheets with given number of goals $G$. Then, $Q(G)$ is a quasipolynomial
of degree $11$ and period $12$. Moreover, we computed
$$\sum_{G=0}^{\infty} Q(G)t^G =
\frac{R(t)}
{(1-t)^2(1-t^2)(1-t^4)^2(1-t^{12})^{7}},
$$
where $R(t)$ equals
\begin{align*}
&1+ t+ 9t^2+ 27t^3+ 109t^4+ 79t^5+ 511t^6+ 619t^7+ 1948t^8+ 1517t^9+ 5426t^{10}\\
&+ 5233t^{11}+ 16348t^{12}+ 8123t^{13}+ 30635t^{14}+ 31085t^{15}+ 71333t^{16}+ 29381t^{17}\\ &+ 129582t^{18}+100425t^{19}+ 226133t^{20}+ 106469t^{21}+ 352563t^{22}+ 255195t^{23}\\ &+ 602325t^{24}+205146t^{25}+ 774351t^{26}+ 600016t^{27}+ 1136341t^{28}+ 348198t^{29}\\
&+ 1502118t^{30}+ 934180t^{31}+1820773t^{32}+ 644194t^{33}+ 2127040t^{34}+ 1295458t^{35}\\ &+ 2619417t^{36}+646956t^{37}+ 2642866t^{38}+ 1711900t^{39}+ 2797833t^{40}+ 648858t^{41}\\ &+ 2966496t^{42}+1529130t^{43}+2665305t^{44}+ 721796t^{45}+ 2465066t^{46}+ 1286628t^{47}\\ &+ 2281092t^{48}+375173t^{49}+1848683t^{50}+ 1014611t^{51}+ 1414748t^{52}+ 226313t^{53}\\ &+ 1226575t^{54}+509533t^{55}+792641t^{56}+ 152129t^{57}+ 572326t^{58}+ 249805t^{59}\\ &+ 381577t^{60}+25597t^{61}+239179t^{62}+ 107619t^{63}+ 117396t^{64}+ 7537t^{65}\\
&+ 81226t^{66}+ 23469t^{67} +31248t^{68}+ 2811t^{69}+ 15627t^{70}+ 5037t^{71}+ 5928t^{72} \\ &-720t^{73}+ 2325t^{74}+738t^{75}+288t^{76} -90 t^{77}+180t^{78}.
\end{align*}

In addition, we have that the quasi-polynomial $Q(G)$ equals the $i$-th row of the matrix given by the product $A\cdot (G^0, G, G^2, \ldots , G^{11})^{T}$ if $G \equiv i \mod 12$, where the matrix $A$ is written by typographical reasons as
$
A=\left(
\begin{array}{c|c}
A_1 & A_3 \\
\hline
A_2 & A_4
\end{array}
\right ),
$
and the submatrices $A_i$ are precisely

\footnotesize
$$
A_1=\left(
\begin{array}{cccccc}
1 & \frac{10631}{6160} & \frac{1640537}{1209600} & \frac{4678633}{7257600} & \frac{9526327}{46448640} & \frac{18987713}{418037760} \\
&&&&&\\
\frac{812827273}{1934917632} &\frac{9999191849}{10346434560} &\frac{5723257663}{6270566400} &\frac{490622483}{1007769600} &\frac{629685829}{3762339840} &\frac{684923}{17418240}\\
&&&&&\\
\frac{103759}{236196} &\frac{38833721}{40415760} &\frac{30392849}{32659200} &\frac{454020097}{881798400} &\frac{169838633}{940584960} &\frac{26580107}{627056640} \\
&&&&&\\
\frac{18657}{32768} &\frac{5144987}{4730880} &\frac{72507023}{77414400}  &\frac{2006881}{4147200} &\frac{7703989}{46448640} &\frac{2045809}{52254720} \\
&&&&&\\
\frac{50234}{59049} &\frac{64855871}{40415760} &\frac{130542697}{97977600} &\frac{1142071019}{1763596800} &\frac{777295207}{3762339840} &\frac{6353131}{139345920} \\
&&&&&\\
\frac{500482697}{1934917632} &\frac{7217434409}{10346434560} &\frac{1574201621}{2090188800}  &\frac{445464083}{1007769600} &\frac{605457989}{3762339840} &\frac{6092627}{156764160}
\end{array}
\right )
$$

$$
A_2=\left(
\begin{array}{cccccc}
\frac{3}{4} &\frac{8321}{6160} &\frac{1347587}{1209600} &\frac{2020379}{3628800} &\frac{2154073}{11612160} &\frac{8919769}{209018880} \\
&&&&&\\
\frac{812827273}{1934917632} &\frac{9999191849}{10346434560} &\frac{5723257663}{6270566400} &\frac{490622483}{1007769600} &\frac{629685829}{3762339840} &\frac{684923}{17418240}\\
&&&&&\\
\frac{40702}{59049} &\frac{53989631}{40415760} &\frac{38302499}{32659200} &\frac{1063043819}{1763596800} &\frac{753067367}{3762339840} &\frac{56604739}{1254113280}\\
&&&&&\\
\frac{18657}{32768} &\frac{5144987}{4730880} &\frac{72507023}{77414400} &\frac{2006881}{4147200} &\frac{7703989}{46448640} &\frac{2045809}{52254720} \\
&&&&&\\
\frac{141887}{236196} &\frac{49699961}{40415760} &\frac{106813747}{97977600} &\frac{493533697}{881798400} &\frac{175895593}{940584960} &\frac{2985203}{69672960} \\
&&&&&\\
\frac{500482697}{1934917632} &\frac{7217434409}{10346434560} &\frac{1574201621}{2090188800} &\frac{445464083}{1007769600} &\frac{605457989}{3762339840} &\frac{6092627}{156764160}
\end{array}
\right )
$$

$$
A_3=\left(
\begin{array}{cccccc}
 \frac{50777501}{7166361600} & \frac{233250653}{300987187200} & \frac{18663361}{321052999680} & \frac{771073}{270888468480} & \frac{19}{232243200} & \frac{1}{958003200}\\
&&&&&\\
\frac{12980747}{2015539200} &\frac{430729}{587865600} &\frac{141929}{2508226560} &\frac{3413}{1209323520} &\frac{19}{232243200} &\frac{1}{958003200}\\
&&&&&\\
\frac{27583349}{4031078400} &\frac{14362883}{18811699200}  &\frac{290419}{5016453120} &\frac{771073}{270888468480}  &\frac{19}{232243200} &\frac{1}{958003200}\\
&&&&&\\
\frac{1440883}{223948800}  &\frac{430729}{587865600}&\frac{141929}{2508226560} &\frac{3413}{1209323520} &\frac{19}{232243200} &\frac{1}{958003200}\\
&&&&&\\
\frac{457407109}{64497254400} &\frac{233250653}{300987187200} &\frac{18663361}{321052999680} &\frac{771073}{270888468480} &\frac{19}{232243200} &\frac{1}{958003200}\\
&&&&&\\
\frac{12955147}{2015539200} &\frac{430729}{587865600}&\frac{141929}{2508226560} &\frac{3413}{1209323520} &\frac{19}{232243200} &\frac{1}{958003200}
\end{array}
\right )
$$

$$
A_4=\left(
\begin{array}{cccccc}
\frac{3067661}{447897600} &\frac{14362883}{18811699200}  &\frac{290419}{5016453120} &\frac{771073}{270888468480} &\frac{19}{232243200} &\frac{1}{958003200} \\
&&&&&\\
\frac{12980747}{2015539200} &\frac{430729}{587865600}&\frac{141929}{2508226560} &\frac{3413}{1209323520} &\frac{19}{232243200} &\frac{1}{958003200}\\
&&&&&\\
\frac{456587909}{64497254400} &\frac{233250653}{300987187200} &\frac{18663361}{321052999680} &\frac{771073}{270888468480} &\frac{19}{232243200} &\frac{1}{958003200}\\
&&&&&\\
\frac{1440883}{223948800} &\frac{430729}{587865600}&\frac{141929}{2508226560} &\frac{3413}{1209323520} &\frac{19}{232243200} &\frac{1}{958003200}\\
&&&&&\\
\frac{27634549}{4031078400} &\frac{14362883}{18811699200} &\frac{290419}{5016453120}&\frac{771073}{270888468480} &\frac{19}{232243200} &\frac{1}{958003200}\\
&&&&&\\
\frac{12955147}{2015539200} &\frac{430729}{587865600} &\frac{141929}{2508226560} &\frac{3413}{1209323520} &\frac{19}{232243200} &\frac{1}{958003200}
\end{array}
\right )
$$
\normalsize
\end{proposition}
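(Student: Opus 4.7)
The plan is to realize $\mathscr{M}_4 = C_4 \cap \ZZ^{12}$, where $C_4 \subset \RR^{12}$ is the pointed rational cone on the coordinates $g_{ij}$ ($1\le i\neq j\le 4$) cut out by the $12$ nonnegativity constraints $g_{ij}\ge 0$ together with the three ordering inequalities $g_i-g_{i+1}\ge 0$ for $i=1,2,3$, where $g_i=\sum_{j\neq i}g_{ij}$, and to grade it by $\deg S=\|S\|=g_1+g_2+g_3+g_4$. The Hilbert--Serre--Ehrhart--Stanley theorem recalled in Section~2 then immediately produces a rational Hilbert series and a quasipolynomial $Q(G)$ of degree $\dim C_4-1 = n^2-n-1 = 11$, which settles the degree claim.

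\textbf{Period.} By Corollary~\ref{cor:extremerays} the extreme integral generators of $C_4$ coincide with $\HB(\mathscr{M}_4)$. From Theorem~\ref{theo:HB} these $120$ generators fall into exactly four classes, corresponding to $p_i=(1,\ldots,1,0,\ldots,0)$ with $i$ ones, so their degrees under $\|\cdot\|$ take only the values $1,2,3,4$. Hence $e=\lcm(1,2,3,4)=12$ and the period of $Q(G)$ divides $12$. That the period equals $12$ (and not a proper divisor) is verified a posteriori from the explicit matrix $A$: the first row of $A_1$ and the first row of $A_2$ differ, so $Q(G)$ at $G\equiv 0$ and at $G\equiv 6\pmod{12}$ are distinct polynomials, ruling out period $6$; smaller divisors are ruled out analogously by comparing further rows.

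\textbf{Explicit data.} The specific numerator $R(t)$, the denominator $(1-t)^2(1-t^2)(1-t^4)^2(1-t^{12})^{7}$ and the quasipolynomial matrix $A$ are obtained by feeding the above cone description (the $12$ coordinate inequalities, the $3$ ordering inequalities, and the grading form) into the software Normaliz~\cite{Nmz}. Normaliz computes a triangulation of $C_4$, sums up the generating functions of each simplicial subcone, and then chooses a denominator whose factors $(1-t^{d})$ match the degrees $d\in\{1,2,3,4\}$ of the extreme rays rather than the crude $(1-t^{12})^{12}$; this is the denominator-minimization discussed in \cite[Section~4]{BIS} and produces exactly the shape stated. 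Expanding the resulting rational function and collecting coefficients by residue class modulo $12$ yields the twelve polynomial components displayed through the block matrix $A$. The compressed property from Proposition~\ref{prop:compressed} can be exploited inside Normaliz to take a pulling (hence unimodular) triangulation of $\mathscr{P}_4$ under the auxiliary grading $\deg_1$, making the bookkeeping cleaner.

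\textbf{Main obstacle.} The entire difficulty is computational. The cone sits in dimension $12$ with $15$ facet inequalities and $120$ extreme rays; the resulting triangulation is large and the intermediate arithmetic is heavy, as visible from denominators such as $958003200$ throughout $A$ and from the numerator $R(t)$ of degree $78$ with $79$ integer coefficients that must be produced consistently. Verification would proceed in two independent ways: first, by cross-checking the coefficients $Q(G)$ for small $G$ against a direct enumeration of ordered score sheets with $G$ goals; second, by confirming that the leading coefficient predicted by $A$ matches the general multiplicity formula $1/(n!\,(n^2-n-1)!) = 1/(24\cdot 10!)$ established in Theorem~\ref{theo:Mult} for $n=4$.
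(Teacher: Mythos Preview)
Your proposal is correct and matches the paper's approach: the proposition is a computational statement obtained by feeding the cone $C_4$ into Normaliz, and the paper offers no proof beyond that. You actually add useful theoretical scaffolding that the paper leaves implicit (the degree $11$ from $\dim C_4-1$, the period bound via $\lcm(1,2,3,4)=12$, and the confirmation that the period is exactly $12$ by comparing rows of $A$).

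One small arithmetic slip to fix: in your verification paragraph you write the predicted leading coefficient as $1/(24\cdot 10!)$, but the formula from Theorem~\ref{theo:Mult} gives $1/(n!\,(n^2-n-1)!)=1/(4!\cdot 11!)=1/958003200$, which is indeed the entry appearing in the last column of $A_3$ and $A_4$.
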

\medskip

The cases $n=5,6,7$ may also be computed using Normaliz \cite{Nmz} or its offspring NmzIntegrate \cite{BS}. A quite powerful computer should be used for these computations.  The results obtained are summarized in the following:

\begin{proposition}\label{prop:567}
Let $Q(G)$ denote the number of ordered score sheets with given number of goals $G$. Then
\begin{enumerate}
\item For $n=5$ it turns out that $Q(G)$ is a quasipolynomial
of degree $19$ and period $60$. The Hilbert series $\sum_{G=0}^{\infty} Q(G)t^G$ equals
$$
\frac{1+ 16t^2+ 40t^3+ 276t^4 +898t^5+\cdots + 134400 t^{652}
-67200 t^{653}+28000 t^{654} }{(1-t)^{4}(1-t^5)^{2}(1-t^{10})(1-t^{20})^{3}(1-t^{60})^{10}}.
$$
\item For $n=6$ it turns out that $Q(G)$ is a quasipolynomial
of degree $29$ and period $60$. Moreover, we computed that the Hilbert series $\sum_{G=0}^{\infty} Q(G)t^G$ equals
$$
\frac{1+ 2t+ 27t^2+ 127t^3 +852t^4+ \cdots + 60637500t^{1128} -30870000t^{1129}+ 23152500t^{1130}}{(1-t)^{3}(1-t^{2})(1-t^{6})^{5}(1-t^{30})^{4}(1-t^{60})^{17}}.
$$
\item For $n=7$ we get that $Q(G)$ is a quasipolynomial
of degree $41$ and period $420$. Further, we computed that the Hilbert series $\sum_{G=0}^{\infty} Q(G)t^G$ equals
$$
\frac{1+ 36t^2+ 126t^3 +1317t^4+ \cdots  -443603381760t^{10099}+ 103507455744t^{10100}}{(1-t)^{6}(1-t^{7})^{3}(1-t^{14})(1-t^{42})^{6}(1-t^{210})^{5}(1-t^{420})^{21}}.
$$
\end{enumerate}
\end{proposition}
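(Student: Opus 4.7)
The plan is essentially computational, but the structural parameters (degree and period of the quasipolynomial) can be pinned down in advance from the general theory. First, I would write the cone $C_n = \RR_+\mathscr{M}_n$ as an intersection of halfspaces: the $n(n-1)$ nonnegativity inequalities $g_{ij} \geq 0$ for $1 \le i \neq j \le n$, together with the $n-1$ ordering inequalities $g_i \geq g_{i+1}$ for $i = 1, \ldots, n-1$ (obtained by summing the $g_{ij}$ row by row). Fixing the grading $\deg S = g_1 + \cdots + g_n$, we have $\dim C_n = n^2 - n$, so the Hilbert function is a quasipolynomial of degree $n^2 - n - 1$, giving precisely the values $19$, $29$, $41$ for $n = 5, 6, 7$.

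For the period, I would invoke Corollary~\ref{cor:extremerays}: the extreme integral generators of $C_n$ are exactly the Hilbert basis elements described in Theorem~\ref{theo:HB}, whose degrees run over $\{1, 2, \ldots, n\}$. By the Hilbert--Serre--Ehrhart--Stanley theorem, the period therefore divides $\operatorname{lcm}(1, 2, \ldots, n)$, yielding the upper bounds $60$, $60$, $420$ in the three cases. Additionally, the leading coefficient of each polynomial component of the quasipolynomial must equal $\tfrac{1}{n!(n^2-n-1)!}$ by Theorem~\ref{theo:Mult}, giving a useful independent cross-check on the computed output.

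With these structural constraints in hand, I would then feed the halfspace description of $C_n$ into Normaliz \cite{Nmz}, which computes the Hilbert series by triangulating the cone (and, optionally, Stanley-decomposing it) and summing the contributions of the simplicial subcones. Thanks to Proposition~\ref{prop:compressed}, every pulling triangulation is unimodular, which considerably simplifies the simplicial contributions and makes the computation more tractable than one might initially fear. Confirmation that the periods are exactly $60$, $60$, $420$ (and not smaller divisors) comes from inspecting the resulting quasipolynomial components directly.

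The main obstacle will be sheer computational scale, especially for $n = 7$: the ambient lattice has rank $42$, the Hilbert basis has $\sum_{i=1}^{7} 6^i = 55986$ elements by Theorem~\ref{theo:HB}, and the triangulations involved are correspondingly vast. In practice I would switch from Normaliz to its offspring NmzIntegrate \cite{BS}, use a machine with many cores to exploit Normaliz's parallelism, and allow a long running time; the magnitude of the final coefficients in the $n=7$ numerator (of order $10^{11}$, with numerator degree above $10^4$) reflects just how much arithmetic is involved. The detailed timings, memory footprints, and hardware used are then reported in Section~6.
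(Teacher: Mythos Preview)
Your structural analysis is sound and matches the paper: the degree $n^2-n-1$, the period bound $\operatorname{lcm}(1,\ldots,n)$ via Corollary~\ref{cor:extremerays} and Theorem~\ref{theo:HB}, and the leading-coefficient check via Theorem~\ref{theo:Mult} are all exactly the right pre-computational sanity checks. The paper itself gives no proof beyond computation, so in that respect you are on the same track.

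However, your computational plan has a genuine gap. Feeding the halfspace description of $C_n$ directly into Normaliz (even with the unimodular triangulation guaranteed by Proposition~\ref{prop:compressed}, and even with parallelism) does \emph{not} terminate for $n=6$ or $n=7$: see Table~\ref{times3}, where the direct Hilbert-series option \ttt{-q} is marked ``--'' for both. The decisive idea, described in Section~\ref{section:computational}, is Sch\"urmann's symmetry exploitation: since the variables $g_{i,1},\ldots,\widehat{g_{i,i}},\ldots,g_{i,n}$ in each row enter the defining inequalities only through their sum $g_i$, one projects $C_n\subset\RR^{n^2-n}$ onto a cone $C_n'\subset\RR^n$ and replaces the ordinary Hilbert series by a \emph{generalized} (weighted) Hilbert series on $C_n'\cap\ZZ^n$, where the weight records the number of preimages. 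It is precisely this reduction that makes NmzIntegrate relevant and brings the $n=7$ computation down to minutes rather than ``never''. Your mention of NmzIntegrate is on the right scent, but without the projection step there is nothing for it to integrate.

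Two minor corrections: parallelism is essentially ineffective here (the paper explains why in Remark~\ref{rem:tri} and the closing remark of Section~\ref{section:computational}: the Stanley decomposition of $C_n'$ has a single space), so your plan to lean on many cores would not help; and your Hilbert-basis count for $n=7$ should be $\sum_{i=1}^{7}6^{i}=335922$, not $55986$ (you stopped the sum at $i=6$).
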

Note that the numerators of Hilbert series presented as rational functions in Proposition \ref{prop:567} are not written entirely by space reasons, since they are built up with too many and big coefficients.

We would like to close this section by raising the following open question:

\begin{question}\label{quest:HS}
Does there exist a formula for the expression of the Hilbert series of $\mathscr{M}_n$  (for any $n$)?
\end{question}


\section{Computational experiments}\label{section:computational}

The results presented in the previous sections were first conjectured by extensive computational experiments.
In this section we document these experiments, in the hope that the data obtained may be useful for readers interested in running similar experiments.

For the experiments we have used the software Normaliz \cite{Nmz}, together with the extension NmzIntegrate \cite{BS} and the graphical interface jNormaliz \cite{AI}. For the algorithms implemented in Normaliz, we recommend the reader to see Bruns and Koch \cite{BK}, Bruns and Ichim \cite{BI}, Bruns, Hemmecke, Ichim, K\"oppe and S\"oger \cite{BHIKS}, Bruns and S\"oger \cite{BS}, and Bruns, Ichim, and S\"oger \cite{BIS}. All computations were run on a Dell PowerEdge R910 with 4 Intel
Xeon E7540 (a total of 24 cores running at 2 GHz), 128 GB of
RAM and a hard disk of 500 GB. In parallelized computations we
have limited the number of threads used to $20$. In
Tables \ref{times1} and \ref{times2}
serial execution is indicated by \ttt{1x}, whereas \ttt{20x} indicates parallel
execution with a maximum of $20$ threads. By \ttt{nxn} we denote the monoid $\mathscr{M}_n$.
All computation times are measured using version 3.0 of Normaliz.

\subsection{Hilbert bases computations using Normaliz}

We have run experiments using both the primal (see \cite{BI} and \cite{BHIKS}) and the dual algorithm of Normaliz (see \cite{BI}).
Table \ref{times1} contains the computation times for the
Hilbert bases of the \ttt{nxn} ordered score sheets that we have obtained in these experiments.
The command line option \ttt{-N} indicates the usage of the primal algorithm, and \ttt{-d} indicates the usage of the dual algorithm for Hilbert bases.

\begin{table}[hbt]
\centering
\begin{small}
\begin{tabular}{|r|r|r|r|r|r|}\hline
\rule[-0.1ex]{0ex}{2.5ex}Input &\ttt{Nmz -N 1x}&\ttt{Nmz -N 20x}&\ttt{Nmz  -d 1x}&\ttt{Nmz -d 20x}\\ \hline
\strut \ttt{3x3}               & 0.003 s       & 0.014 s        & 0.003 s        & 0.014 s\\ \hline
\strut \ttt{4x4}               & 0.017 s       & 0.052 s        & 0.012 s        & 0.024 s\\ \hline
\strut \ttt{5x5}               & 0.324 s       & 0.712 s        & 0.206 s        & 0.222 s\\ \hline
\strut \ttt{6x6}               & 16.973 s      & 19.296 s       & 5.888 s        & 6.603 s\\ \hline
\strut \ttt{7x7}               & 39:40 m       & 54:41 m        & 4:06 m         & 3:39 m \\ \hline
\strut \ttt{8x8}               & --            & 404:26:28 h   & 16:51:42 h     & 3:51:22 h \\ \hline
\end{tabular}
\vspace*{2ex} \caption{Computation times for Hilbert
bases}\label{times1}
\end{small}
\end{table}

We discuss the observations made during this experiment in the following remark.

\begin{remark} (a) As can be seen from the table, the dual algorithm is better suited for computing the Hilbert bases of this particular family of monoids. This confirms the empirical conclusion that the dual algorithm is faster when the numbers of support hyperplanes is small relative to the dimension (see \cite{BIS} for more relevant examples).

(b) We also note that the Hilbert basis of the \ttt{9x9} ordered score sheets is very likely computable with Normaliz and the dual algorithm, however we have stopped this experiment after about two weeks since we were able by that time to formulate Theorem \ref{theo:HB}.

(c) At a first glimpse the computation times may seem paradoxical and
the reader may ask why parallelization has no effect
in the computations presented in Table \ref{times1}. We briefly explain this in the following.
On the one hand, when using the primal algorithm, Normaliz first finds the extreme rays of the cone $C_n=\RR_+\mathscr{M}_n$. This step is very fast and parallelization is very efficient. Then:
\begin{enumerate}
\item it computes the support hyperplanes of $C_n$ by using the well-known Fourier -- Motzkin elimination.
It starts from the zero cone, and then
the extreme rays are inserted successively (for details see \cite[Section 4]{BI});
\item intertwined with (1), a partial triangulation of C is builded (for details see \cite[Section 3.2]{BHIKS}). This partial triangulation is a subcomplex of the full lexicographic triangulation
obtained by inserting successively the extreme rays (for details see \cite[Section 4]{BI}).
\end{enumerate}
For all the examples in Table \ref{times1} we observed that:
\begin{enumerate}
\item the number of hyperplanes stays extremely low all the time during the Fourier -- Motzkin elimination;
\item the partial triangulation is empty! Note that this is the
case if and only if the full lexicographic triangulation is a unimodular triangulation.
\end{enumerate}
Parallelization as implemented in the current version of Normaliz is efficient if there are many support hyperplanes (at each step in the Fourier -- Motzkin elimination)
and after the (partial) triangulation reaches a reasonable size.

On the other hand, when using the dual algorithm the effective computation of the Hilbert basis is extremely fast and most of the time is spent on data transformation. This explains
why parallelization has almost no effect.
\end{remark}

\subsection{Multiplicity, Hilbert Series and the exploitation of symmetry}
For the following discussion we take as an example the monoid $\mathscr{M}_4$. For efficient computations we have encoded the entries of the $4x4$ score tables as in the table below.
\bigskip
$$
\begin{array}{|c c c c|}
\hline
\ast&x_1&x_2&x_3\\
x_4&\ast&x_5&x_6\\
x_7&x_8&\ast&x_9\\
x_{10}&x_{11}&x_{12}&\ast\\
\hline
\end{array}
$$
\bigskip
As a subcone of $\RR_+^{12}$, the cone   is
defined by the inequalities in Table \ref{ineq}.

\begin{table}[hbt]
\fbox{\parbox[b]{\linewidth}\centering
{\small \tabcolsep=1.3pt
\begin{tabular}{rrrrrrrrrrrrrr}
&$+x_1$&$+x_2$&$+x_3$&$-x_4$&$-x_5$&$-x_6$&      &      &      &         &         &         & $\ge 0$  \\
&      &      &      &$+x_4$&$+x_5$&$+x_6$&$-x_7$&$-x_8$&$-x_9$&         &         &         & $\ge 0$  \\
&      &      &      &      &      &      &$+x_7$&$+x_8$&$+x_9$&$-x_{10}$&$-x_{11}$&$-x_{12}$& $\ge 0$  \\
\end{tabular}}
}
\vspace*{2ex} \caption{Inequalities for
$C_4$}\label{ineq}
\end{table}

It is easy to notice the high degree of symmetry of these inequalities. As is often the case, it is better to use for effective computations the elegant approach of Sch\"urmann presented in \cite{Sch}.
If certain variables, for example $x_1,x_2,x_3$, occur
in all of the linear forms given in Table \ref{ineq}, then any permutation of them acts as a symmetry on
the corresponding cone, and the variables
$x_1,x_2,x_3$ may be replaced by their sum
$x_1+x_2+x_3$. (The cone has further
symmetries.) These substitutions may be used for a projection into
a space of much lower dimension, mapping the cone $C_4$ under
consideration to a cone $C'_4\subset \RR_+^{4}$. The computation of the Hilbert series of $\mathscr{M}_4=C_4\cap\ZZ^{12}$ may be replaced by the computation of the \emph{generalized Hilbert series} of $\mathscr{M'}_4=C'_4\cap\ZZ^{4}$ given by

$$
GH_{\mathscr{M'}_4}(t)=\sum_{x\in C'_4\cap\ZZ^4,\deg x=k } f(x)t^{k}.
$$

The theory of generalized Ehrhart functions has recently been
developed in several papers; see Baldoni, Berline, De Loera, K\"oppe, and Vergne \cite{BB1},
\cite{BB2}, and Schechter \cite{Sc} (they are also refereed as \emph{generalized Ehrhart series}, depending on the context). An extension of Normaliz to the computation of
generalized Hilbert series is presented in \cite{BS} and implemented as NmzIntegrate \cite{Nmz}. We briefly note that the approach to the computation of generalized Hilbert series in NmzIntegrate
is essentially based on \emph{Stanley decompositions} \cite{S}. For further details we point the reader to \cite{BS} and the manual of NmzIntegrate.

Table \ref{times2} contains the computation times for the
multiplicities of the ordered score sheets. The option \ttt{-v} indicates the standard Normaliz algorithm for computing multiplicities, and \ttt{-L} indicates the usage of NmzIntegrate for computing multiplicities.

\begin{table}[hbt]
\centering
\begin{small}
\begin{tabular}{|r|r|r|r|r|r|}\hline
\rule[-0.1ex]{0ex}{2.5ex}Input &\ttt{Nmz -v 1x}&\ttt{Nmz -v 20x}&\ttt{Nmz  -L 1x}&\ttt{Nmz -L 20x}\\ \hline
\strut \ttt{3x3}               & 0.003 s       & 0.014 s        & 0.003 s        & 0.014 s      \\ \hline
\strut \ttt{4x4}               & 0.045 s       & 0.082 s        & 0.004 s        & 0.015 s      \\ \hline
\strut \ttt{5x5}               & 1:29:41 h     & 9:49 m         & 0.011 s        & 0.021 s      \\ \hline
\strut \ttt{6x6}               & --            & --             & 0.161 s        & 0.183 s      \\ \hline
\strut \ttt{7x7}               & --            & --             & 8.513 s        & 9.329 s      \\ \hline
\strut \ttt{8x8}               & --            & --             & 10:10 m        & 10:44 m      \\ \hline
\end{tabular}
\vspace*{2ex} \caption{Computation times for multiplicities}\label{times2}
\end{small}
\end{table}

\begin{remark}\label{rem:tri}
For computing the multiplicity using the option \ttt{-v} Normaliz is producing a full triangulation of the cone. In all the cases in which we were able to compute the multiplicity by this algorithm,
the \emph{explicit} triangulation obtained was unimodular. As observed above, also the \emph{implicit} triangulations obtained in computations made using the option \ttt{-N} were unimodular. We also note that, in general, the triangulations made by Normaliz are \emph{not} the
pulling triangulations implied by Proposition \ref{prop:compressed}. This is a remarkable fact, and one may ask if it is true for all triangulations, or in other words: is the polytope $\mathscr{P}_n$  totally unimodular?
\end{remark}

Table \ref{times3} contains the computation times for the
Hilbert series of the ordered score sheets. The option \ttt{-q} indicates the standard Normaliz algorithm for computing Hilbert series, and \ttt{-E} indicates the usage of NmzIntegrate for computing Hilbert series.

\begin{table}[hbt]
\centering
\begin{small}
\begin{tabular}{|r|r|r|r|r|r|}\hline
\rule[-0.1ex]{0ex}{2.5ex}Input &\ttt{Nmz -q 1x}&\ttt{Nmz -q 20x}&\ttt{Nmz  -E 1x}&\ttt{Nmz -E 20x}\\ \hline
\strut \ttt{3x3}               & 0.004 s       & 0.015 s        & 0.005 s        & 0.017 s      \\ \hline
\strut \ttt{4x4}               & 0.099 s       & 0.083 s        & 0.025 s        & 0.042 s    \\ \hline
\strut \ttt{5x5}               & 6:54:03 h     & 34:11 m        & 0.378 s        & 0.466 s      \\ \hline
\strut \ttt{6x6}               & --            & --             & 12.853 s       & 15.296 s      \\ \hline
\strut \ttt{7x7}               & --            & --             & 8:04 m         & 9:45 m \\ \hline
\end{tabular}
\vspace*{2ex} \caption{Computation times for Hilbert series}\label{times3}
\end{small}
\end{table}

\begin{remark} The reader may wonder why there is no significant difference between the serial and the parallel computation times obtained in Tables \ref{times2} and \ref{times3} by NmzIntegrate. There is a simple explication for this.  NmzIntegrate uses the \emph{Stanley spaces} in the Stanley decomposition in order to parallelize computations, see \cite{BS}.  For the family of monoids $\mathscr{M'}_n$, the Stanley decomposition generated by Normaliz and used by NmzIntegrate contains only one Stanley space, so parallelization is not effective.
\end{remark}

\section*{Acknowledgments}

The authors are greatly indebted to Winfried Bruns for pointing out the guidelines in
proving Corollary \ref{cor:extremerays} and suggesting the content of subsection \ref{sec:triangulation}. We also wish to thank Sergiu Moroianu for several helpful comments. Moreover, the authors wish to thank the University of Porto (Portugal), in particular Manuel Delgado, for making it possible several discussions during the AMS-EMS-SPM International Meeting 2015. In addition, the first author wishes to thank also the University Jaume I of Castell\'on, where substantial parts of the paper were written, for the invitation and hospitality.

\end{document}